\renewcommand\eqref[1]{(\ref{#1})}
\newcommand*{\mint}[1]{%
  \mint@l{#1}{}%
}
\newcommand*{\mint@l}[2]{%
  \@ifnextchar\limits{%
    \mint@l{#1}%
  }{%
    \@ifnextchar\nolimits{%
      \mint@l{#1}%
    }{%
      \@ifnextchar\displaylimits{%
        \mint@l{#1}%
      }{%
        \mint@s{#2}{#1}%
      }%
    }%
  }%
}
\newcommand*{\mint@s}[2]{%
  \@ifnextchar_{%
    \mint@sub{#1}{#2}%
  }{%
    \@ifnextchar^{%
      \mint@sup{#1}{#2}%
    }{%
      \mint@{#1}{#2}{}{}%
    }%
  }%
}
\def\mint@sub#1#2_#3{%
  \@ifnextchar^{%
    \mint@sub@sup{#1}{#2}{#3}%
  }{%
    \mint@{#1}{#2}{#3}{}%
  }%
}
\def\mint@sup#1#2^#3{%
  \@ifnextchar_{%
    \mint@sup@sub{#1}{#2}{#3}%
  }{%
    \mint@{#1}{#2}{}{#3}%
  }%
}
\def\mint@sub@sup#1#2#3^#4{%
  \mint@{#1}{#2}{#3}{#4}%
}
\def\mint@sup@sub#1#2#3_#4{%
  \mint@{#1}{#2}{#4}{#3}%
}
\newcommand*{\mint@}[4]{%
  \mathop{}%
  \mkern-\thinmuskip
  \mathchoice{%
    \mint@@{#1}{#2}{#3}{#4}%
        \displaystyle\textstyle\scriptstyle
  }{%
    \mint@@{#1}{#2}{#3}{#4}%
        \textstyle\scriptstyle\scriptstyle
  }{%
    \mint@@{#1}{#2}{#3}{#4}%
        \scriptstyle\scriptscriptstyle\scriptscriptstyle
  }{%
    \mint@@{#1}{#2}{#3}{#4}%
        \scriptscriptstyle\scriptscriptstyle\scriptscriptstyle
  }%
  \mkern-\thinmuskip
  \int#1%
  \ifx\\#3\\\else_{#3}\fi
  \ifx\\#4\\\else^{#4}\fi
}
\newcommand*{\mint@@}[7]{%
  \begingroup
    \sbox0{$#5\int\m@th$}%
    \sbox2{$#5\int_{}\m@th$}%
    \dimen2=\wd0 %
    \let\mint@limits=#1\relax
    \ifx\mint@limits\relax
      \sbox4{$#5\int_{\kern1sp}^{\kern1sp}\m@th$}%
      \ifdim\wd4>\wd2 %
        \let\mint@limits=\nolimits
      \else
        \let\mint@limits=\limits
      \fi
    \fi
    \ifx\mint@limits\displaylimits
      \ifx#5\displaystyle
        \let\mint@limits=\limits
      \fi
    \fi
    \ifx\mint@limits\limits
      \sbox0{$#7#3\m@th$}%
      \sbox2{$#7#4\m@th$}%
      \ifdim\wd0>\dimen2 %
        \dimen2=\wd0 %
      \fi
      \ifdim\wd2>\dimen2 %
        \dimen2=\wd2 %
      \fi
    \fi
    \rlap{%
      $#5%
        \vcenter{%
          \hbox to\dimen2{%
            \hss
            $#6{#2}\m@th$%
            \hss
          }%
        }%
      $%
    }%
  \endgroup
}
\numberwithin{equation}{section}
\theoremstyle{plain}
\newtheorem{thm}{Theorem}[section]
\newtheorem{lem}[thm]{Lemma}
\theoremstyle{definition}
\newtheorem{defn}[thm]{Definition}
\newtheorem{rem}[thm]{Remark}
\begin{document}

   \title[Fractional Gagliardo-Nirenberg and weighted CKN inequalities]
{Anisotropic fractional Gagliardo-Nirenberg, weighted Caffarelli-Kohn-Nirenberg and Lyapunov-type  inequalities, and applications to Riesz potentials and $p$-sub-Laplacian systems}

\author[A. Kassymov]{Aidyn Kassymov}
\address{
  Aidyn Kassymov:
  \endgraf
  \endgraf
  Institute of Mathematics and Mathematical Modeling
  \endgraf
  125 Pushkin str.
  \endgraf
  050010 Almaty
  \endgraf
  Kazakhstan
  \endgraf
  and
  \endgraf
  Al-Farabi Kazakh National University
  \endgraf
   71 Al-Farabi avenue
   \endgraf
   050040 Almaty
   \endgraf
   Kazakhstan
  {\it E-mail address} {\rm kassymov@math.kz}
  }

   \author[M. Ruzhansky]{Michael Ruzhansky}
\address{
	Michael Ruzhansky:
	\endgraf
	Department of Mathematics
	\endgraf
	Imperial College London
	\endgraf
	180 Queen's Gate, London SW7 2AZ
	\endgraf
	United Kingdom
	\endgraf
	{\it E-mail address} {\rm m.ruzhansky@imperial.ac.uk}
}

\author[D. Suragan]{Durvudkhan Suragan}
\address{
	Durvudkhan Suragan:
	\endgraf
	Department of Mathematics
	\endgraf
	School of Science and Technology, Nazarbayev University
	\endgraf
	53 Kabanbay Batyr Ave, Astana 010000
	\endgraf
	Kazakhstan
	\endgraf
	{\it E-mail address} {\rm durvudkhan.suragan@nu.edu.kz}}

\thanks{
The authors were supported in parts by the EPSRC grant EP/R003025/1 and by the Leverhulme
Grant RPG-2017-151, as well as by the MESRK grant AP05130981.
\
}

     \keywords{fractional Gagliardo-Nirenberg inequality, fractional Caffarelli-Kohn-Nirenberg inequality, fractional Lyapunov-type inequality,
     	 homogeneous Lie group.}
 \subjclass{22E30, 43A80.}

     \begin{abstract}
 In this paper we prove the fractional Gagliardo-Nirenberg inequality on  homogeneous Lie groups. Also, we establish weighted fractional Caffarelli-Kohn-Nirenberg inequality and Lyapunov-type inequality for the Riesz potential on homogeneous Lie groups. The obtained Lyapunov inequality for the Riesz potential is new already in the classical setting of $\mathbb{R}^{N}$. As an application, we give two-sided estimate for the first eigenvalue of the Riesz potential. Also, we obtain Lyapunov inequality for the system of the fractional $p$-sub-Laplacian equations and give an application to estimate its eigenvalues.

     \end{abstract}
     \maketitle
   \tableofcontents
\section{Introduction}

 \subsection{Fractional Gagliardo-Nirenberg inequality}
 In the works of E. Gagliardo \cite{Gag} and L. Nirenberg \cite{Nir} (independently), they obtained the following (interpolation) inequality
 \begin{equation}
 \|u\|^{p}_{L^{p}(\mathbb{R}^{N})}\leq C \|\nabla u\|^{N(p-2)/2}_{L^{2}(\mathbb{R}^{N})}\|u\|^{(2p-N(p-2))/2}_{L^{2}(\mathbb{R}^{N})},\,\,u\in H^{1}(\mathbb{R}^{N}),
 \end{equation}
 where \begin{equation*}
 \begin{cases}
   2\leq p\leq\infty\,\,\text{for}\,\, N = 2,\\
   2\leq p \leq \frac{2N}{N-2}\,\,\text{for} \,\,N > 2.
 \end{cases}
\end{equation*}

The Gagliardo-Nirenberg inequality on the Heisenberg group $\mathbb{H}^{n}$ has the following form
\begin{equation}\label{heis}
\|u\|^{p}_{L^{p}(\mathbb{H}^{n})}\leq C \|\nabla_{\mathbb{H}^{n}} u\|^{Q(p-2)/2}_{L^{2}(\mathbb{H}^{n})}\|u\|^{(2p-Q(p-2))/2}_{L^{2}(\mathbb{H}^{n})},
\end{equation}
where $\nabla_{\mathbb{H}}$ is a horizontal gradient and $Q$ is a homogeneous dimension of  $\mathbb{H}^{n}$. In \cite{CR}, the authors established the best constant for the sub-elliptic Gagliardo-Nirenberg inequality \eqref{heis}. Consequently, in \cite{RTY} the best constants in Gagliardo-Nirenberg and Sobolev inequalities were also found for general hypoelliptic (Rockland operators) on general graded Lie groups.

In \cite{NS1} the authors obtained a fractional version of the Gagliardo-Nirenberg inequality in the following form:
\begin{equation}
\|u\|_{L^{\tau}(\mathbb{R}^{N})}\leq C[u]^{a}_{s,p}\|u\|^{1-a}_{L^{\alpha}(\mathbb{R}^{N})},\,\,\forall u\in C^{1}_{c}(\mathbb{R}^{N}),
\end{equation}
where $[u]_{s,p}$ is Gagliardo's seminorm defined by $$[u]_{s,p}^{p}=\int_{\mathbb{R}^{N}}\int_{\mathbb{R}^{N}}\frac{|u(x)-u(y)|^{p}}{|x-y|^{N+sp}}dxdy,$$ for $N\geq1,\,\,s\in(0,1),\,\,p>1,\,\,\alpha\geq1,\,\,\tau>0,$ and $a\in(0,1]$ is such that
\begin{equation*}
\frac{1}{\tau}=a\left(\frac{1}{p}-\frac{s}{N}\right)+\frac{1-a}{\alpha}.
\end{equation*}

In this paper we formulate the fractional Gagliardo-Nirenberg inequality on the homogeneous Lie groups.
To the best of our knowledge,  in this direction systematic studies on the homogeneous Lie groups started by the paper \cite{RSAM} in which homogeneous group versions of Hardy and Rellich inequalities were proved as consequences of universal identities.

 \subsection{Fractional Caffarelli-Kohn-Nirenberg inequality}
In their fundamental work \cite{CKN}, L. Caffarelli, R. Kohn and L. Nirenberg established:
\begin{thm}
Let $N\geq1$, and let $l_1$, $l_2$, $l_3$, $a, \, b, \, d,\, \delta \in \mathbb{R}$ be such that $l_1, l_2 \geq 1$,
$l_3 > 0, \,\,0 \leq \delta \leq 1,$ and
\begin{equation}
\frac{1}{l_1}+\frac{a}{N},\,\,\,\frac{1}{l_2}+\frac{b}{N},\,\,\,\frac{1}{l_3}+\frac{\delta d+(1-\delta) b}{N}>0.
\end{equation}
Then,
\begin{equation}
\||x|^{\delta d+(1-\delta) b}u\|_{L^{l_{3}}(\mathbb{R}^{N})}\leq C\||x|^{a}\nabla u\|^{\delta}_{L^{l_{1}}(\mathbb{R}^{N})}\||x|^{b} u\|^{1-\delta}_{L^{l_{2}}(\mathbb{R}^{N})},\,\,\,u\in C^{\infty}_{c}(\mathbb{R}^{N}),
\end{equation}
if and only if
\begin{multline}
\frac{1}{l_3}+\frac{\delta d+(1-\delta) b}{N}=\delta\left(\frac{1}{l_{1}}+\frac{a-1}{N}\right)+
(1-\delta)\left(\frac{1}{l_2}+\frac{b}{N}\right),\\
a-d\geq0,\,\,\,\,\text{if}\,\,\,\delta>0,\\
a-d\leq1,\,\,\,\,\text{if}\,\,\,\delta>0\,\,\,\text{and}\,\,\,\frac{1}{l_3}+\frac{\delta d+(1-\delta) b}{N}=\frac{1}{l_1}+\frac{a-1}{N},
\end{multline}
where $C$ is a positive constant independent of $u$.
\end{thm}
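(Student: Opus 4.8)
The plan is to treat the two implications separately. For \emph{necessity}, I would exploit the dilation invariance built into the inequality. Replacing $u(x)$ by the rescaled function $u_{\lambda}(x)=u(\lambda x)$ and recording how each of the three weighted norms transforms, both sides acquire definite powers of $\lambda$; since $C$ is independent of $u$, the inequality can survive for all $\lambda>0$ only if these powers agree, and matching them produces exactly the dimensional balance identity $\frac{1}{l_3}+\frac{\delta d+(1-\delta)b}{N}=\delta(\frac{1}{l_1}+\frac{a-1}{N})+(1-\delta)(\frac{1}{l_2}+\frac{b}{N})$. The two remaining constraints $a-d\geq0$ and (in the endpoint case) $a-d\leq1$ I would extract by testing against explicit families that concentrate near the origin or escape to infinity, where a relative weight that is too singular makes the left side diverge while the right side stays finite.

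For \emph{sufficiency}, the central reduction is that the general inequality collapses to the endpoint $\delta=1$. Writing $\gamma=\delta d+(1-\delta)b$ and factoring the integrand as $|x|^{\gamma l_3}|u|^{l_3}=(|x|^{d}|u|)^{\delta l_3}\,(|x|^{b}|u|)^{(1-\delta)l_3}$, a single application of Hölder's inequality, with the dual exponents chosen so that $\frac{\delta}{q}+\frac{1-\delta}{l_2}=\frac{1}{l_3}$, yields
\[
\||x|^{\gamma}u\|_{L^{l_3}}\leq\||x|^{d}u\|_{L^{q}}^{\delta}\,\||x|^{b}u\|_{L^{l_2}}^{1-\delta}.
\]
Here $q$ is defined by the $\delta=1$ balance $\frac{1}{q}+\frac{d}{N}=\frac{1}{l_1}+\frac{a-1}{N}$; a short computation shows that this choice together with the full balance identity is \emph{equivalent} to the Hölder relation above, so no additional hypothesis is consumed. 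Thus everything is reduced to the purely first-order weighted Sobolev inequality $\||x|^{d}u\|_{L^{q}}\leq C\||x|^{a}\nabla u\|_{L^{l_1}}$, which is the case $\delta=1$.

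To establish this $\delta=1$ estimate I would decompose $\mathbb{R}^{N}$ into dyadic annuli $A_{j}=\{2^{j}\leq|x|<2^{j+1}\}$, on each of which the weights are comparable to the constants $2^{ja}$ and $2^{jd}$. On a fixed annulus one pulls the essentially constant weights out and applies a scale-invariant unweighted first-order interpolation (Sobolev--Poincar\'e) inequality, recording the residual power of $2^{j}$; the dimensional balance is precisely the condition that makes these powers align, so that summing the local estimates over $j\in\mathbb{Z}$ reconstructs the global weighted norm rather than a divergent series. An equivalent route is to pass to polar coordinates, reconstruct $u$ along each ray by the fundamental theorem of calculus, and apply a one-dimensional Hardy inequality in the radial variable while integrating over the sphere.

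I expect the genuine difficulty to sit in this $\delta=1$ step, and specifically in the \emph{endpoint} regime where $\frac{1}{l_3}+\frac{\gamma}{N}=\frac{1}{l_1}+\frac{a-1}{N}$. There the inequality is of critical Hardy--Sobolev type, the crude annulus-by-annulus summation becomes only borderline convergent, and the hypothesis $a-d\leq1$ is exactly what excludes the logarithmic failure at the critical exponent; this case demands a sharper local bound (or a one-dimensional Hardy inequality with the optimal weight) rather than the elementary dyadic estimate. By contrast, the Hölder reduction and the scaling necessity are routine once the exponent bookkeeping is in place, so the analysis should be concentrated on the critical weighted Sobolev embedding.
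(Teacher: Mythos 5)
This theorem is quoted background in the paper (it is cited from the Caffarelli--Kohn--Nirenberg paper and never proved there), so your proposal can only be compared with the classical argument. Your necessity plan (scaling to force the balance identity, concentrating/escaping test families for $a-d\geq 0$ and the endpoint restriction $a-d\leq 1$) is sound and is indeed how the original paper argues. The genuine gap is in your sufficiency reduction. You define $q$ by $\frac{1}{q}=\frac{1}{l_1}+\frac{a-1-d}{N}$ and correctly observe that the balance identity is equivalent to the H\"older relation $\frac{\delta}{q}+\frac{1-\delta}{l_2}=\frac{1}{l_3}$. But nothing in the hypotheses forces $\frac{1}{q}>0$. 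Take $N=1$, $l_1=2$, $l_2=1$, $l_3=4$, $a=b=d=0$, $\delta=\frac12$: every hypothesis of the theorem holds (this is the classical Gagliardo--Nirenberg inequality $\|u\|_{L^4(\mathbb{R})}\leq C\|u'\|_{L^2}^{1/2}\|u\|_{L^1}^{1/2}$), yet $\frac{1}{q}=-\frac12$, so there is no H\"older pairing and no weighted Sobolev inequality to reduce to. This is not a boundary pathology: it is the entire regime $l_1>N$, where $W^{1,l_1}$ embeds into H\"older classes rather than into any Lebesgue space, and where a genuinely different (slicing/$L^\infty$-type) argument is unavoidable.

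Second, even when $q>0$ your reduction can target a false inequality. By construction, the reduced estimate $\||x|^{d}u\|_{L^{q}}\leq C\||x|^{a}\nabla u\|_{L^{l_1}}$ satisfies $\frac{1}{q}+\frac{d}{N}=\frac{1}{l_1}+\frac{a-1}{N}$, i.e.\ it sits \emph{exactly} at the endpoint, where --- by the necessity half of the very theorem you are proving --- the condition $a-d\leq 1$ is mandatory. The theorem, however, only imposes $a-d\leq1$ when the \emph{original} exponents are at the endpoint; off the endpoint, $a-d>1$ is allowed. Concretely, $N=1$, $l_1=l_2=2$, $a=2$, $b=d=0$, $\delta=\frac12$, $l_3=1$ satisfies every hypothesis, so $\|u\|_{L^1}\leq C\||x|^{2}u'\|_{L^2}^{1/2}\|u\|_{L^2}^{1/2}$ is a valid instance with $a-d=2$; your H\"older step is legitimate here ($q=\frac23$), but the target inequality $\|u\|_{L^{2/3}}\leq C\||x|^{2}u'\|_{L^2}$ is false, being an endpoint inequality with $a-d=2>1$. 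So the collapse of the general case to $\delta=1$ cannot work: for $0<\delta<1$ the claimed inequalities are strictly weaker than the critical $\delta=1$ inequality your reduction demands, and the proof must exploit the non-endpoint slack \emph{inside} the decomposition (as in the original CKN case analysis, or as in the dyadic-annuli summation this paper runs for its fractional analogue, Theorem \ref{CKN1}) rather than after a single global H\"older step. Your dyadic plan for the genuine $\delta=1$ case is reasonable, but contrary to your closing assessment, that is not where the main difficulty of the theorem lies.
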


 In  \cite{NS1} the authors proved the fractional analogues of the Caffarelli-Kohn-Nirenberg inequality in weighted fractional Sobolev spaces. Also, in  \cite{Abd}  a fractional Caffarelli-Kohn-Nirenberg inequality for an admissible weight in $\mathbb{R}^{N}$ was obtained.

Recently many different versions of Caffarelli-Kohn-Nirenberg inequalities have been obtained, namely, in \cite{ZHD}  on the Heisenberg groups, in \cite{RSY3} and \cite{RS} on
stratified groups, in \cite{RSY1} and \cite{RSY2} on (general) homogeneous Lie groups.
 One of the aims of this paper is to prove the fractional weighted  Caffarelli-Kohn-Nirenberg inequality on the homogeneous Lie groups.
\subsection {Fractional Lyapunov-type inequality}
Historically, in Lyapunov's work \cite{Lyap} for the following one-dimensional homogeneous Dirichlet boundary value problem (for the second order ODE)

\begin{equation}\label{slayp}
 \begin{cases}
   u''(x)+\omega(x)u(x)=0,\,\,x\in(a,b),\\
   u(a)=u(b)=0,
 \end{cases}
\end{equation}
it was proved that if $u$ is a non-trivial solution of \eqref{slayp} and $\omega(x)$ is a real-valued and continuous function on $[a,b]$, then necessarily
 \begin{equation}\label{slaypin}
\int^{b}_{a}|\omega(x)|dx>\frac{4}{b-a}.
\end{equation}
 Nowadays, there are many extensions of Lyapunov's inequality. In \cite{Ed} the author obtains Lyapunov's inequality for the one-dimensional Dirichlet $p$-Laplacian
\begin{equation}\label{playp}
 \begin{cases}
   (|u'(x)|^{p-2}u'(x))'+\omega(x)u(x)=0,\,\,x\in(a,b),\,\,\,1<p<\infty,\\
   u(a)=u(b)=0,
 \end{cases}
\end{equation}
where $\omega(x)\in L^{1}(a,b)$, so necessarily
 \begin{equation}\label{playpin}
\int^{b}_{a}|\omega(x)|dx>\frac{2^{p}}{(b-a)^{p-1}},\,\,\,1<p<\infty.
\end{equation}
Obviously, taking $p=2$ in \eqref{playpin}, we recover the classical Lyapunov inequality \eqref{slaypin}.

In \cite{Kir} the authors obtained interesting results concerning Lyapunov inequalities for the multi-dimesional fractional $p$-Laplacian $(-\Delta_{p})^{s}$, $1<p<\infty,\,\,s\in(0,1)$, with a homogeneous Dirichlet boundary condition, that is,
\begin{equation}\label{fplayp}
 \begin{cases}
   (-\Delta_{p})^{s}u=\omega(x)|u|^{p-2}u,\,\,x\in\Omega,\\
   u(x)=0,\,\,x\in \mathbb{R}^{N}\setminus\Omega,
 \end{cases}
\end{equation}
where $\Omega\subset \mathbb{R}^{N}$ is an open set, $1<p<\infty,$ and $s\in(0,1).$
Let us recall the following result of \cite{Kir}.
\begin{thm}
Let $\Omega\subset \mathbb{R}^{N}$ be an open set, and let $\omega\in L^{\theta}(\Omega)$ with $1<\frac{N}{sp}<\theta<\infty,$  be a non-negative weight. Suppose that problem \eqref{fplayp} has a non-trivial weak solution $u\in W^{s,p}_{0}(\Omega)$. Then
\begin{equation}\label{fplyapq>sp}
\left(\int_{\Omega}\omega^{\theta}(x) \,dx \right)^{\frac{1}{\theta}}>\frac{C}{r_{\Omega}^{sp-\frac{N}{\theta}}},
\end{equation}
where $C>0$ is a universal constant and $r_{\Omega}$ is the inner radius of $\Omega$.
\end{thm}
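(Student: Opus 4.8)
The plan is to run the classical energy method for Lyapunov inequalities in the fractional nonlinear setting, and to let the geometry of $\Omega$ enter only through a single Sobolev--Poincar\'e inequality that carries the inner radius $r_{\Omega}$. The condition $1<\tfrac{N}{sp}<\theta$ will be used twice: it forces $sp<N$ (so that the fractional Sobolev exponent $p^{*}=\tfrac{Np}{N-sp}$ exists and is finite) and it gives $\theta'<\tfrac{N}{N-sp}$, so that the Hölder exponent $q:=p\theta'$ lies strictly between $p$ and $p^{*}$.

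First I would use the weak formulation of \eqref{fplayp}. Since $u\in W^{s,p}_{0}(\Omega)$ is admissible as its own test function, inserting it into the weak formulation of the equation gives the energy identity
\[
[u]_{s,p}^{p}=\int_{\Omega}\omega(x)\,|u(x)|^{p}\,dx .
\]
Applying Hölder's inequality with the conjugate exponents $\theta,\theta'$ to the right-hand side then yields
\[
[u]_{s,p}^{p}\le\Big(\int_{\Omega}\omega^{\theta}\,dx\Big)^{1/\theta}\,\|u\|_{L^{q}(\Omega)}^{p},\qquad q=p\theta'\in(p,p^{*}).
\]

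The heart of the argument is an inequality of the form
\[
\|u\|_{L^{q}(\Omega)}\le C\,r_{\Omega}^{\,s-\frac{N}{p}+\frac{N}{q}}\,[u]_{s,p},\qquad u\in W^{s,p}_{0}(\Omega),
\]
whose exponent equals $(sp-\tfrac{N}{\theta})/p$ when $q=p\theta'$ (a short computation using $1/\theta'=p/q$), and which is the only exponent consistent with the dilation $x\mapsto\lambda x$. I would obtain it by interpolating, via Hölder on the level of norms, the scale-invariant fractional Sobolev inequality $\|u\|_{L^{p^{*}}(\mathbb{R}^{N})}\le C[u]_{s,p}$ against the fractional Poincar\'e inequality $\|u\|_{L^{p}(\Omega)}\le C\,r_{\Omega}^{\,s}[u]_{s,p}$; the latter is the single place where the inscribed ball $B_{r_{\Omega}}\subset\Omega$ and the vanishing of $u$ on $\mathbb{R}^{N}\setminus\Omega$ are exploited. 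Substituting this into the previous display and cancelling the factor $[u]_{s,p}^{p}$, which is strictly positive because $u$ is a \emph{non-trivial} solution, produces $\|\omega\|_{L^{\theta}(\Omega)}\ge C^{-p}\,r_{\Omega}^{-(sp-N/\theta)}$; the strict inequality in \eqref{fplyapq>sp} then follows from the observation that equality in the Hölder and Sobolev steps cannot hold simultaneously for a function in $W^{s,p}_{0}(\Omega)$, since the Sobolev extremals are not supported in any proper subset of $\mathbb{R}^{N}$.

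I expect the main obstacle to be precisely the fractional Poincar\'e inequality with the sharp inner-radius scaling $\|u\|_{L^{p}(\Omega)}\le C\,r_{\Omega}^{\,s}[u]_{s,p}$: estimating the first eigenvalue $\lambda_{1}$ of the fractional $p$-sub-Laplacian from below by $c\,r_{\Omega}^{-sp}$ is the delicate geometric point, as domain monotonicity alone (which compares $\Omega$ with the inscribed ball) gives the wrong direction, and one must instead control the non-local tail contributed by $\mathbb{R}^{N}\setminus\Omega$ uniformly in the position of the inscribed ball. Once this estimate and the fractional Sobolev embedding are in hand, all remaining steps---the energy identity, the application of Hölder, and the final cancellation---are routine.
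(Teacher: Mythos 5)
Your skeleton coincides with the one the paper uses for its own general version of this result (Theorem \ref{thmlyap}, of which the statement above is the case $n=1$, $\alpha_{1}=p$, $\mathbb{G}=(\mathbb{R}^{N},+)$): test the weak formulation with $u$ itself to get $[u]_{s,p}^{p}=\int_{\Omega}\omega|u|^{p}\,dx$, apply H\"older with $(\theta,\theta')$ so that $q=p\theta'\in(p,p^{*})$, and close with an interpolation bound on $\|u\|_{L^{q}}$ that carries the radius; your exponent bookkeeping ($s-\frac{N}{p}+\frac{N}{q}=\frac{1}{p}(sp-\frac{N}{\theta})$ for $q=p\theta'$) is correct. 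The difference is the ingredient carrying the geometry. The paper never uses a Poincar\'e inequality with the inner radius: it writes $|u|^{\xi}=|u|^{\gamma p}|u|^{(1-\gamma)p^{*}}$ with $\xi=p\theta'$ and $\gamma=\frac{\theta-Q/(sp)}{\theta-1}$, applies H\"older with exponents $1/\gamma$ and $1/(1-\gamma)$ while keeping the weight $q^{-\gamma sp}(x)$ on the first factor, and estimates the two factors by the \emph{point-singularity} fractional Hardy inequality (Theorem \ref{harun1}, valid for every $u\in C^{\infty}_{c}$ with a constant independent of $\Omega$) and the fractional Sobolev inequality (Theorem \ref{sob}); the radius then enters only through the trivial pointwise bound $q(x)\leq r_{\Omega,q}:=\max\{q(x):x\in\Omega\}$, a circumradius-type quantity measured from a fixed interior point, not the inscribed radius.

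This difference is exactly where your proposal has a genuine gap. The lemma on which everything rests, $\|u\|_{L^{p}(\Omega)}\leq C\,r_{\Omega}^{\,s}[u]_{s,p}$ for all $u\in W^{s,p}_{0}(\Omega)$ with universal $C$ and $r_{\Omega}$ the inradius, is not proved in your sketch, and it is in fact false for general open sets in the regime the hypotheses force: $\theta>\frac{N}{sp}>1$ gives $sp<N$, so points (hence countable sets) have zero $(s,p)$-capacity. Taking $\Omega=B_{R}\setminus\varepsilon\mathbb{Z}^{N}$ one has $W^{s,p}_{0}(\Omega)=W^{s,p}_{0}(B_{R})$, so the first eigenfunction of $(-\Delta_{p})^{s}$ on $B_{R}$ with $\omega\equiv\lambda_{1}(B_{R})\sim R^{-sp}$ is a nontrivial weak solution on $\Omega$, and $\|\omega\|_{L^{\theta}(\Omega)}\sim R^{\frac{N}{\theta}-sp}\to 0$ as $R\to\infty$, while $r_{\Omega}\leq\varepsilon\sqrt{N}$ stays fixed; in particular no bound $\lambda_{1}(\Omega)\geq c\,r_{\Omega}^{-sp}$ can hold. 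Your proposed repair --- controlling the nonlocal tail from $\mathbb{R}^{N}\setminus\Omega$ uniformly in the position of the inscribed ball --- cannot succeed, because the part of the complement that makes $r_{\Omega}$ small has zero capacity and contributes no tail. (The same example shows that the inradius formulation \eqref{fplyapq>sp} itself requires some thickness hypothesis on $\Omega^{c}$, e.g.\ a capacity-density condition, or $sp>1$ with Lipschitz boundary, in which case the boundary Hardy inequality $\int_{\Omega}|u|^{p}\,\mathrm{dist}(x,\partial\Omega)^{-sp}dx\leq C[u]^{p}_{s,p}$ together with $\mathrm{dist}(x,\partial\Omega)\leq r_{\Omega}$ yields precisely your Poincar\'e step.) So either add such a hypothesis and prove the Poincar\'e inequality via a boundary Hardy inequality, or trade the inradius for a distance-from-a-point radius as the paper does, for which the Hardy inequality is unconditional. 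The remaining steps (energy identity, H\"older, cancellation of $[u]^{p}_{s,p}>0$) are fine; the strictness of the inequality is a minor point by comparison (the paper's own version \eqref{lyap} is stated with ``$\geq$'').
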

In \cite{DP}, the authors considered a system of ODE for $p$ and $q$-Laplacian on the interval $(a,b)$ with the homogeneous Dirichlet condition in the following form:

\begin{equation}\label{ODEsys}
 \begin{cases}
 -(|u'(x)|^{p-2}u'(x))'=f(x)|u(x)|^{\alpha-2}u(x)|v(x)|^{\beta},\\
 -(|v'(x)|^{q-2}v'(x))'=g(x)|u(x)|^{\alpha}|v(x)|^{\beta-2}v(x),
    \end{cases}
\end{equation}
on the interval $(a,b),$ with
\begin{equation}\label{ODEDIR}
u(a)=u(b)=v(a)=v(b)=0,
\end{equation}
where  $f,g\in L^{1}(a,b),$ $f,g\geq0$, $p,q>1,$ $\alpha,\,\beta\geq0$ and
$$\frac{\alpha}{p}+\frac{\beta}{q}=1.$$ Then we have Lyapunov-type inequality for system \eqref{ODEsys} with homogeneous Dirichlet condition \eqref{ODEDIR}:
\begin{equation}
2^{\alpha+\beta}\leq (b-a)^{\frac{\alpha}{p'}+\frac{\beta}{q'}}\left(\int_{a}^{b}f(x)dx\right)^{\frac{\alpha}{p}}\left(\int_{a}^{b}g(x)dx\right)^{\frac{\beta}{q}},
\end{equation}
where $p'=\frac{p}{p-1}$ and $q'=\frac{q}{q-1}.$ In \cite{Kir1}, the authors obtained the Lyapunov-type inequality for a fractional $p$-Laplacian system in an open bounded subset $\Omega\subset \mathbb{R}^{N}$ with homogeneous Dirichlet conditions.
 One of our goals in this paper is to extend the Lyapunov-type inequality for the Riesz potential  and for the fractional $p$-sub-Laplacian system on the homogeneous Lie groups. These results are given in Theorem \ref{Lyapunov} and \ref{thmlyap}. Also, we give applications of the Lyapunov-type inequality for the Riesz potential and  for fractional $p$-sub-Laplacian system  on the homogeneous Lie groups. To demonstrate our techniques we consider the Riesz potential in the Abelian case $(\mathbb{R}^{N},+)$ and give two side estimates of the first eigenvalue of the Riesz potential in the Abelian case $(\mathbb{R}^{N},+)$.

 Summarising our main results of the present paper, we prove the following facts:
\begin{itemize}
\item An analogue of the fractional Gagliardo-Nirenberg inequality on the homogeneous group $\mathbb{G}$;
\item An analogue of the fractional weighted  Caffarelli-Kohn-Nirenberg inequality on $\mathbb{G}$;
\item An analogue of the Lyapunov-type inequality for the Riesz potential on $\mathbb{G}$;
\item An analogue of the Lyapunov-type inequality for the fractional $p$-sub-Laplacian system on $\mathbb{G}$.
\end{itemize}
The paper is organised as follows. First we give some basic discussions on fractional Sobolev spaces and related facts  on homogeneous Lie groups, then in Section \ref{SEC:3} we present the fractional Gagliardo-Nirenberg inequality on $\mathbb{G}$. The fractional weighted Caffarelli-Kohn-Nirenberg inequality on $\mathbb{G}$ is proved  in Section \ref{SEC:4}. In Section \ref{SEC:5} we discuss analogues of the Lyapunov-type inequalities for the Riesz potential and fractional $p$-sub-Laplacian system on $\mathbb{G}$.

\section{Preliminaries}
\label{SEC:2}
 We recall that a Lie group (on $\mathbb{R}^{n}$) $\mathbb{G}$ with the dilation
$$D_{\lambda}(x):=(\lambda^{\nu_{1}}x_{1},\ldots,\lambda^{\nu_{n}}x_{n}),\; \nu_{1},\ldots, \nu_{n}>0,\; D_{\lambda}:\mathbb{R}^{n}\rightarrow\mathbb{R}^{n},$$
which is an automorphism of the group $\mathbb{G}$ for each $\lambda>0,$
is called a {\em homogeneous (Lie) group}. In this paper, for simplicity, we use the notation $\lambda x$ instead of the dilation $D_{\lambda}(x)$. The homogeneous dimension of the homogeneous group $\mathbb{G}$ is denoted by $$Q:=\nu_{1}+\ldots+\nu_{n}.$$

A homogeneous quasi-norm on $\mathbb{G}$ is a continuous non-negative function
\begin{equation}
\mathbb{G}\ni x\mapsto q(x)\in[0,\infty),
\end{equation}
with the properties

\begin{itemize}
	\item[i)] $q(x)=q(x^{-1})$ for all $x\in\mathbb{G}$,
	\item[ii)] $q(\lambda x)=\lambda q(x)$ for all $x\in \mathbb{G}$ and $\lambda>0$,
	\item[iii)] $q(x)=0$ iff $x=0$.
\end{itemize}
Moreover, the following polarisation formula on homogeneous Lie groups will be used in our proofs:
there is a (unique)
positive Borel measure $\sigma$ on the
unit quasi-sphere
$
\omega_{Q}:=\{x\in \mathbb{G}:\,q(x)=1\},
$
so that for every $f\in L^{1}(\mathbb{G})$ we have
\begin{equation}\label{EQ:polar}
\int_{\mathbb{G}}f(x)dx=\int_{0}^{\infty}
\int_{\omega_{Q}}f(ry)r^{Q-1}d\sigma(y)dr.
\end{equation}
We refer to \cite{FS1} for the original appearance of such groups, and to \cite{FR} for a recent comprehensive treatment.  Let $p>1$, $s\in(0,1)$, and let $\mathbb{G}$ be a homogeneous Lie group of homogeneous dimension $Q$.
For a measurable function $u:\mathbb{G}\rightarrow \mathbb{R}$ we define the Gagliardo quasi-seminorm by
\begin{equation}\label{gsmnm}
[u]_{s,p,q}=\left( \int_{\mathbb{G}} \int_{\mathbb{G}}\frac{|u(x)-u(y)|^{p}}{q^{Q+sp}(y^{-1}\circ x)}dxdy\right)^{1/p}.
\end{equation}
Now we recall the definition of the fractional Sobolev spaces on homogeneous Lie groups denoted
by $W^{s,p,q}(\mathbb{G})$. For $p\geq1$ and $s\in(0,1)$, the functional space
\begin{equation}
W^{s,p,q}(\mathbb{G})=\{u\in L^{p}(\mathbb{G}): u \text {\,\,is measurable}, [u]_{s,p,q}<+\infty \},
\end{equation}
is called the fractional Sobolev space on $\mathbb{G}$.

Similarly, if $\Omega\subset \mathbb{G}$ is a Haar measurable set, we define the Sobolev space
\begin{multline}\label{ofss}
W^{s,p,q}(\Omega)=\{u\in L^{p}(\Omega): u \text {\,\,is measurable},\\
[u]_{s,p,q,\Omega}=\left( \int_{\Omega}\int_{\Omega}\frac{|u(x)-u(y)|^{p}}{q^{Q+sp}(y^{-1}\circ x)}dxdy\right)^{\frac{1}{p}}<+\infty\}.
\end{multline}

Now we recall the definition of the weighted fractional Sobolev space on the homogeneous Lie groups denoted by
\begin{multline}\label{wfss1}
W^{s,p,\beta,q}(\mathbb{G})=\{u\in L^{p}(\mathbb{G}): u \text {\,\,is measurable}, \\ [u]_{s,p,\beta,q}=\left(\int_{\mathbb{G}}\int_{\mathbb{G}}\frac{q^{\beta_{1}p}(x)q^{\beta_{2}p}(y)|u(x)-u(y)|^{p}}{q^{Q+sp}(y^{-1}\circ x)}dxdy\right)^{\frac{1}{p}}<+\infty \},
\end{multline}
where $\beta_{1},\,\,\beta_{2}\in \mathbb{R}$ with $\beta=\beta_{1}+\beta_{2}$ and it depends on $\beta_{1}$ and $\beta_{2}$.

As above, for a Haar measurable set $\Omega\subset \mathbb{G}$ , $p\geq1$, $s\in(0,1)$ and $\beta_{1},\,\,\beta_{2}\in \mathbb{R}$ with $\beta=\beta_{1}+\beta_{2}$,  we define the weighted fractional Sobolev space
\begin{multline}\label{wfss}
W^{s,p,\beta,q}(\Omega)=\{u\in L^{p}(\Omega): u \text {\,\,is measurable}, \\ [u]_{s,p,\beta,q,\Omega}=\left(\int_{\Omega}\int_{\Omega}\frac{q^{\beta_{1}p}(x)q^{\beta_{2}p}(y)|u(x)-u(y)|^{p}}{q^{Q+sp}(y^{-1}\circ x)}dxdy\right)^{\frac{1}{p}}<+\infty \}.
\end{multline}
Obviously, taking $\beta=\beta_{1}=\beta_{2}=0$ in \eqref{wfss}, we recover \eqref{ofss}.

The mean of a function $u$ is defined by
\begin{equation}
u_{\Omega}=\mint{-}_{\Omega}udx=\frac{1}{|\Omega|}\int_{\Omega}udx,\,\,\,u\in L^{1}(\Omega),
\end{equation}
where $|\Omega|$ is the Haar measure of $\Omega\subset \mathbb{G}$.

We will also use the decomposition of $\mathbb{G}$ into quasi-annuli $A_{k,q}$ defined by
\begin{equation}\label{Ak}
A_{k,q}:=\{x\in\mathbb{G}:\,\,2^{k}\leq q(x)<2^{k+1}\},
\end{equation}
where $q(x)$ is a quasi-norm on $\mathbb{G}$.

\section{Fractional Gagliargo-Nirenberg inequality on $\mathbb{G}$}
\label{SEC:3}
In this section we prove an analogue of the fractional Gagliardo-Nirenberg inequality on the homogeneous Lie groups. To prove Gagliardo-Nirenberg's inequality we need some preliminary results from \cite{KS}, a version of a fractional Sobolev inequality on the homogeneous Lie groups.

From now on, unless specified otherwise, $\mathbb{G}$ will be a homogeneous group of homogeneous dimension $Q$.

\begin{thm}[\cite{KS}, Fractional Sobolev inequality]\label{sob}
Let $p>1$, $s\in(0,1)$, $Q>sp,$ and let $q(\cdot)$ be a quasi-norm on $\mathbb{G}$. For any measurable and compactly supported function $u:\mathbb{G}\rightarrow \mathbb{R}$ there exists a positive constant $C=C(Q,p,s,q)>0$ such that
  \begin{equation}\label{sobin1}
  ||u||^{p}_{L^{p^{*}}(\mathbb{G})}\leq C[u]^{p}_{s,p,q},
  \end{equation}
  where $p^{*}=p^{*}(Q,s)=\frac{Qp}{Q-sp}$.
\end{thm}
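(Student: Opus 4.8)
The plan is to transplant the dyadic level-set method familiar from the Euclidean theory of fractional Sobolev spaces (Di Nezza--Palatucci--Valdinoci) to the homogeneous group $\mathbb{G}$, with the polar formula \eqref{EQ:polar} and the dilations playing the role of Euclidean scaling. First I would reduce to a nonnegative, bounded, compactly supported $u$: passing from $u$ to $|u|$ does not increase $[u]_{s,p,q}$ because $\big||u(x)|-|u(y)|\big|\le|u(x)-u(y)|$, and a truncation-and-approximation argument removes the boundedness and support assumptions at the end. Writing $D_k=\{x\in\mathbb{G}:u(x)>2^{k}\}$ and $a_k=|D_k|$, the layer-cake formula gives $\|u\|_{L^{p^{*}}(\mathbb{G})}^{p^{*}}\lesssim\sum_{k\in\mathbb{Z}}2^{kp^{*}}a_k$, so it suffices to bound this dyadic sum.

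The second step introduces the dyadic slices $u_k=\min\{(u-2^{k})_{+},2^{k}\}$, which satisfy $u=\sum_k u_k$, take values in $[0,2^{k}]$, equal $2^{k}$ on $D_{k+1}$, and vanish off $D_k$. Since each slice is a $1$-Lipschitz function of $u$ and the increments $u_k(x)-u_k(y)$ share a common sign, one has the pointwise contraction $\sum_k|u_k(x)-u_k(y)|^{p}\le|u(x)-u(y)|^{p}$, whence
\[
\sum_{k\in\mathbb{Z}}[u_k]_{s,p,q}^{p}\le[u]_{s,p,q}^{p}.
\]
This is the mechanism that packages the single global seminorm into usable per-scale pieces.

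The heart of the argument is a per-level fractional-isoperimetric estimate. Because $u_k=2^{k}$ on $D_{k+1}$ and $u_k=0$ on $\mathbb{G}\setminus D_k$,
\[
[u_k]_{s,p,q}^{p}\ \ge\ 2^{kp}\int_{D_{k+1}}\int_{\mathbb{G}\setminus D_k}\frac{dy\,dx}{q^{Q+sp}(y^{-1}\circ x)}.
\]
For a fixed $x\in D_{k+1}$ I would estimate the inner integral from below by restricting $y$ to a quasi-ball $B$ (with respect to $q$) of Haar measure $2a_k$: since $|D_k|=a_k$, at most half of $B$ lies in $D_k$, so $|(\mathbb{G}\setminus D_k)\cap B|\ge a_k$, while $q(y^{-1}\circ x)$ is bounded by the radius $\sim a_k^{1/Q}$ on $B$. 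Here the left-invariance of the Haar measure, the dilation identity $|B_r|=r^{Q}|B_1|$, and the polar formula \eqref{EQ:polar} are exactly what make the computation work and show the inner integral is $\gtrsim a_k^{-sp/Q}$. Integrating in $x$ yields the key per-level bound $[u_k]_{s,p,q}^{p}\gtrsim 2^{kp}a_{k+1}a_k^{-sp/Q}$.

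Finally I would sum over scales. Setting $b_k=2^{kp^{*}}a_k$ and using $p^{*}=\tfrac{Qp}{Q-sp}$, so that $p/p^{*}=1-\tfrac{sp}{Q}$, the per-level bound combined with the contraction rewrites as $\sum_k b_{k+1}b_k^{-sp/Q}\lesssim[u]_{s,p,q}^{p}$. Expressing each $b_k$ as an infinite product of the quantities $b_{j+1}b_j^{-sp/Q}$ with geometric weights, and applying a weighted arithmetic--geometric mean inequality followed by Young's convolution inequality on $\ell^{p^{*}/p}(\mathbb{Z})$, converts this into $\sum_k b_k\lesssim[u]_{s,p,q}^{p^{*}}$, which is precisely $\|u\|_{L^{p^{*}}(\mathbb{G})}^{p^{*}}\lesssim[u]_{s,p,q}^{p^{*}}$, i.e.\ the claim. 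I expect the main obstacle to be the per-level isoperimetric estimate together with this nonlinear summation: the former is where the group structure and the merely quasi-subadditive $q$ must be handled with care (the nesting $D_{k+1}\subset D_k$ forcing the exponent $a_k^{-sp/Q}$ rather than $a_{k+1}^{-sp/Q}$), and the latter is the non-routine discrete inequality that absorbs this mismatch.
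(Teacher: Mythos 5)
Your proof is correct, but note first that this paper never proves Theorem \ref{sob} at all: it is imported verbatim from the cited reference \cite{KS}, so there is no in-paper argument to match against. Measured against the proof in \cite{KS} --- which adapts the Di Nezza--Palatucci--Valdinoci level-set scheme to $\mathbb{G}$ exactly as you guessed --- your reconstruction follows the same overall strategy (dyadic level sets $D_k$, a per-level fractional isoperimetric bound via a quasi-ball of measure $2a_k$ using only left-invariance and the identity $|B_r|=r^{Q}|B_1|$, then a nonlinear discrete summation), but with two implementation variants that are, if anything, cleaner. Where the DNPV-style argument controls the seminorm by a double sum over products $A_i\times A_j$ of dyadic shells with the bookkeeping $|u(x)-u(y)|\geq 2^{i}-2^{j+1}\geq 2^{i-1}$, you use the truncation slices $u_k=\min\{(u-2^{k})_{+},2^{k}\}$ and the pointwise contraction $\sum_k|u_k(x)-u_k(y)|^{p}\leq|u(x)-u(y)|^{p}$ (valid since the increments share a sign and $p\geq1$), which packages the same information with no overlap analysis; and where DNPV invoke their discrete Lemma 6.3 for nonincreasing sequences, you prove the needed inequality $\sum_k b_k\lesssim\bigl(\sum_k b_{k+1}b_k^{-sp/Q}\bigr)^{p^{*}/p}$ directly by telescoping $b_k=c_{k-1}b_{k-1}^{\theta}$ with $\theta=sp/Q$, weighted AM--GM with weights $(1-\theta)\theta^{i-1}$, and an $\ell^{1}$-convolution bound --- this works because $b_{k-m}^{\theta^{m}}\to1$ for the bounded sequence $b_k=2^{kp^{*}}a_k$, and the monotonicity $a_{k+1}\leq a_k$ rules out division by zero below any index with $b_k\neq0$. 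Two small remarks: the quasi-triangle inequality for $q$ is never actually needed (your ball argument uses only the measure identity and the symmetry $q(y^{-1}\circ x)=q(x^{-1}\circ y)$), so the caution you flag there is unnecessary; and the reduction to bounded $u$ by truncation plus monotone convergence should be stated, as the theorem assumes only measurable compactly supported $u$, but this is routine.
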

\begin{thm}\label{GN}
Assume that $Q\geq2$, $s\in(0,1)$, $p>1$, $\alpha\geq1$, $\tau>0$, $a\in(0,1]$, $Q>sp$  and
\begin{equation*}
\frac{1}{\tau}=a\left(\frac{1}{p}-\frac{s}{Q}\right)+\frac{1-a}{\alpha}.
\end{equation*}
Then,
\begin{equation}
\|u\|_{L^{\tau}(\mathbb{G})}\leq C[u]^{a}_{s,p,q}\|u\|^{1-a}_{L^{\alpha}(\mathbb{G})},\,\,\forall\,\, u\in C^{1}_{c}(\mathbb{G}),
\end{equation}
 where $C=C(s,p,Q,a,\alpha)>0$.
\end{thm}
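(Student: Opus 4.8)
The plan is to interpolate the fractional Sobolev inequality of Theorem~\ref{sob} against the $L^{\alpha}$ norm using an elementary H\"older argument, exactly mirroring the Euclidean strategy of \cite{NS1}. The exponent condition $\frac{1}{\tau}=a\left(\frac{1}{p}-\frac{s}{Q}\right)+\frac{1-a}{\alpha}$ is precisely what makes such an interpolation dimensionally consistent, so the role of Theorem~\ref{sob} is to supply the endpoint estimate $\|u\|_{L^{p^{*}}(\mathbb{G})}\leq C[u]_{s,p,q}$ with $p^{*}=\frac{Qp}{Q-sp}$, i.e. $\frac{1}{p^{*}}=\frac{1}{p}-\frac{s}{Q}$. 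In terms of $p^{*}$ the hypothesis reads
\begin{equation}\label{EQ:expcond}
\frac{1}{\tau}=\frac{a}{p^{*}}+\frac{1-a}{\alpha}.
\end{equation}

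First I would dispose of the degenerate endpoint $a=1$: in that case \eqref{EQ:expcond} forces $\tau=p^{*}$ and the claimed inequality is literally \eqref{sobin1} (with the exponent $p$ stripped off by taking $p$-th roots), so nothing further is needed. For $a\in(0,1)$ I would interpret \eqref{EQ:expcond} as expressing $\frac{1}{\tau}$ as a convex combination of $\frac{1}{p^{*}}$ and $\frac{1}{\alpha}$ with weights $a$ and $1-a$. The standard way to exploit this is to write, for $u\in C^{1}_{c}(\mathbb{G})$,
\begin{equation}
\|u\|_{L^{\tau}(\mathbb{G})}^{\tau}=\int_{\mathbb{G}}|u(x)|^{\tau\theta}\,|u(x)|^{\tau(1-\theta)}\,dx,
\end{equation}
and apply H\"older's inequality with a pair of conjugate exponents chosen so that the first factor lands in $L^{p^{*}}$ and the second in $L^{\alpha}$. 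Concretely, one sets the split so that $\tau\theta$ and the H\"older exponent on that factor combine to give $L^{p^{*}}$ while $\tau(1-\theta)$ on the conjugate factor gives $L^{\alpha}$; solving these constraints reproduces \eqref{EQ:expcond} and fixes $\theta=\frac{\tau a}{p^{*}}$. This yields the multiplicative interpolation
\begin{equation}
\|u\|_{L^{\tau}(\mathbb{G})}\leq \|u\|_{L^{p^{*}}(\mathbb{G})}^{a}\,\|u\|_{L^{\alpha}(\mathbb{G})}^{1-a}.
\end{equation}

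To finish, I would substitute the Sobolev bound $\|u\|_{L^{p^{*}}(\mathbb{G})}\leq C^{1/p}[u]_{s,p,q}$ from Theorem~\ref{sob} into the first factor, obtaining
\begin{equation}
\|u\|_{L^{\tau}(\mathbb{G})}\leq C^{a/p}\,[u]_{s,p,q}^{a}\,\|u\|_{L^{\alpha}(\mathbb{G})}^{1-a},
\end{equation}
which is the assertion with constant $C(s,p,Q,a,\alpha)=C^{a/p}$. The only genuine care required is the bookkeeping that the two H\"older exponents are conjugate and that the resulting powers of $u$ are exactly $a$ and $1-a$; I expect this verification of the exponent arithmetic to be the main (and essentially the only) obstacle, and it is entirely algebraic once \eqref{EQ:expcond} is rewritten in the form above. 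Since all integrals are taken over $\mathbb{G}$ against the Haar (Lebesgue) measure, no special feature of the group structure enters beyond what Theorem~\ref{sob} already encapsulates, so the argument is identical in spirit to the Euclidean case.
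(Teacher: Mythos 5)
Your proposal is correct and follows essentially the same route as the paper's proof: split $|u|^{\tau}=|u|^{a\tau}\,|u|^{(1-a)\tau}$, apply H\"older's inequality to obtain $\|u\|_{L^{\tau}(\mathbb{G})}\leq\|u\|_{L^{p^{*}}(\mathbb{G})}^{a}\|u\|_{L^{\alpha}(\mathbb{G})}^{1-a}$, and then insert the fractional Sobolev inequality of Theorem \ref{sob}. The only slip is notational: solving the exponent constraints gives interpolation weight $\theta=a$ (the quantity $\tau a/p^{*}$ is the reciprocal H\"older exponent, not $\theta$), but your displayed inequalities already use the correct powers $a$ and $1-a$, so nothing is affected.
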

\begin{proof}[Proof of Theorem \ref{GN}]
By using the H\"{o}lder inequality, for every $\frac{1}{\tau}=a\left(\frac{1}{p}-\frac{s}{Q}\right)+\frac{1-a}{\alpha}$ we get
\begin{equation}\label{h1}
\|u\|^{\tau}_{L^{\tau}(\mathbb{G})}=\int_{\mathbb{G}}|u|^{\tau}dx=\int_{\mathbb{G}}|u|^{a\tau}|u|^{(1-a)\tau}dx\leq\|u\|_{L^{p^{*}}(\mathbb{G})}^{a\tau}\|u\|^{(1-a)\tau}_{L^{\alpha}(\mathbb{G})},
\end{equation}
where $p^{*}=\frac{Qp}{Q-sp}$.
From \eqref{h1}, by using the fractional Sobolev inequality (Theorem \ref{sob}), we obtain

\begin{equation*}
\|u\|^{\tau}_{L^{\tau}(\mathbb{G})}\leq\|u\|_{L^{p^{*}}(\mathbb{G})}^{a\tau}\|u\|^{(1-a)\tau}_{L^{\alpha}(\mathbb{G})}\leq C[u]^{a\tau}_{s,p,q}\|u\|^{(1-a)\tau}_{L^{\alpha}(\mathbb{G})},
\end{equation*}
that is,
\begin{equation}
\|u\|_{L^{\tau}(\mathbb{G})}\leq C[u]^{a}_{s,p,q}\|u\|^{1-a}_{L^{\alpha}(\mathbb{G})},
\end{equation}
where $C$ is a positive constant independent of $u$.
Theorem \ref{GN} is proved.
\end{proof}
\begin{rem}
In the Abelian case $(\mathbb{R}^{N},+)$ with the standard Euclidean distance instead of the quasi-norm, from Theorem \ref{GN} we get the fractional Gagliardo-Nirenberg inequality which was proved in \cite{NS1}.
\end{rem}

\section{Weighted fractional Caffarelli-Kohn-Nirenberg inequality on $\mathbb{G}$}\label{SEC:4}
In this section we prove the weighted fractional Caffarelli-Kohn-Nirenberg inequality on the homogeneous Lie groups.

\begin{thm}\label{CKN1}
Assume that $Q\geq2$, $s\in(0,1)$, $p>1$, $\alpha\geq1$, $\tau>0$, $a\in(0,1]$, $\beta_1,\,\beta_2,\,\beta,\,\mu,\,\gamma\in\mathbb{R}$, $\beta_1+\beta_2=\beta$  and
\begin{equation}\label{1.2}
\frac{1}{\tau}+\frac{\gamma}{Q}=a\left(\frac{1}{p}+\frac{\beta-s}{Q}\right)+(1-a)\left(\frac{1}{\alpha}+\frac{\mu}{Q}\right).
\end{equation}
Assume in addition that, $0\leq\beta-\sigma$ with
 $\gamma=a\sigma+(1-a)\mu,$
and
\begin{equation}\label{4.2}
\beta-\sigma\leq s\,\,\,\text{only if}\,\,\,\,\,\frac{1}{\tau}+\frac{\gamma}{Q}=\frac{1}{p}+\frac{\beta-s}{Q}.
\end{equation}
Then for $u\in C_{c}^{1}(\mathbb{G})$ we have
\begin{equation}\label{1in}
\|q^{\gamma}(x)u\|_{L^{\tau}(\mathbb{G})}\leq C [u]^{a}_{s,p,\beta,q}\|q^{\mu}(x)u\|^{1-a}_{L^{\alpha}(\mathbb{G})},
\end{equation}
when $\frac{1}{\tau}+\frac{\gamma}{Q}>0$, and
 for $u\in C_{c}^{1}(\mathbb{G}\setminus\{e\})$ we have
\begin{equation}\label{2in}
\|q^{\gamma}(x)u\|_{L^{\tau}(\mathbb{G})}\leq C [u]^{a}_{s,p,\beta,q}\|q^{\mu}(x)u\|^{1-a}_{L^{\alpha}(\mathbb{G})},
\end{equation}
when $\frac{1}{\tau}+\frac{\gamma}{Q}<0$.
Here  $e$ is the identity element of $\mathbb{G}$.
\end{thm}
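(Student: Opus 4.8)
The plan is to reproduce the interpolation scheme of Theorem \ref{GN}, but to split the weight in such a way that the estimate reduces to H\"older's inequality together with a \emph{weighted} fractional Sobolev inequality. First I would introduce the auxiliary exponent $\sigma$ supplied by the hypothesis $\gamma=a\sigma+(1-a)\mu$, write the weight multiplicatively as $q^{\gamma\tau}=q^{a\sigma\tau}\,q^{(1-a)\mu\tau}$, and record
\begin{equation*}
\|q^{\gamma}u\|_{L^{\tau}(\mathbb{G})}^{\tau}=\int_{\mathbb{G}}\left(q^{\sigma}(x)|u(x)|\right)^{a\tau}\left(q^{\mu}(x)|u(x)|\right)^{(1-a)\tau}\,dx .
\end{equation*}
Applying H\"older's inequality with the exponents $\tfrac{p^{*}_{\beta}}{a\tau}$ and $\tfrac{\alpha}{(1-a)\tau}$, where I set $\tfrac{1}{p^{*}_{\beta}}:=\tfrac{1}{p}+\tfrac{\beta-s-\sigma}{Q}$, yields
\begin{equation*}
\|q^{\gamma}u\|_{L^{\tau}(\mathbb{G})}^{\tau}\leq \|q^{\sigma}u\|_{L^{p^{*}_{\beta}}(\mathbb{G})}^{a\tau}\,\|q^{\mu}u\|_{L^{\alpha}(\mathbb{G})}^{(1-a)\tau}.
\end{equation*}
The one bookkeeping point is to verify that these exponents are conjugate: substituting $\gamma=a\sigma+(1-a)\mu$ into the balance condition \eqref{1.2} and cancelling the common term $(1-a)\mu/Q$ collapses \eqref{1.2} to $\tfrac{1}{\tau}=\tfrac{a}{p^{*}_{\beta}}+\tfrac{1-a}{\alpha}$, which is precisely the conjugacy relation $\tfrac{a\tau}{p^{*}_{\beta}}+\tfrac{(1-a)\tau}{\alpha}=1$. (For $a=1$ the second factor disappears and the statement is the pure weighted Sobolev estimate below.)

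After this reduction the whole inequality rests on the weighted fractional Sobolev estimate
\begin{equation*}
\|q^{\sigma}u\|_{L^{p^{*}_{\beta}}(\mathbb{G})}\leq C\,[u]_{s,p,\beta,q},\qquad \frac{1}{p^{*}_{\beta}}=\frac{1}{p}+\frac{\beta-s-\sigma}{Q},
\end{equation*}
which I expect to be the main obstacle, since it is the genuinely new ingredient (the proof of Theorem \ref{GN} used only the unweighted Theorem \ref{sob}). A scaling check $u\mapsto u(\lambda\,\cdot\,)$, using that dilations are automorphisms and that the Haar measure scales by $\lambda^{Q}$, shows that both sides transform with the same homogeneity exactly under this exponent relation, so $p^{*}_{\beta}$ is the only admissible exponent, and the structural hypotheses $0\leq\beta-\sigma$ and $\beta-\sigma\leq s$ (the latter forced in the critical case \eqref{4.2}) are what guarantee that the estimate actually holds.

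To prove the weighted Sobolev inequality I would use the quasi-annuli decomposition \eqref{Ak}. On each $A_{k,q}$ the quasi-norm is comparable to $2^{k}$, so the weights $q^{\sigma},q^{\beta_{1}},q^{\beta_{2}}$ may be frozen to the constants $2^{k\sigma},2^{k\beta_{1}},2^{k\beta_{2}}$ up to fixed factors, and by the polarisation formula \eqref{EQ:polar} the Haar measure of $A_{k,q}$ is comparable to $2^{kQ}$. Normalising each annulus to the unit annulus $\{1\leq q<2\}$ by dilation lets me apply a single fixed (Sobolev--Poincar\'e) form of Theorem \ref{sob} there and scale back. The two hypotheses then enter in complementary ways: $\beta-\sigma\geq0$ gives $p^{*}_{\beta}\leq p^{*}=\tfrac{Qp}{Q-sp}$, which makes the spatial inclusion $L^{p^{*}}(A_{k,q})\hookrightarrow L^{p^{*}_{\beta}}(A_{k,q})$ available on each finite-measure annulus, while $\beta-\sigma\leq s$ gives $p^{*}_{\beta}\geq p$, which makes the index-level inclusion $\ell^{p}\hookrightarrow\ell^{p^{*}_{\beta}}$ available for the summation in $k$; together they sandwich $p\leq p^{*}_{\beta}\leq p^{*}$, and the powers of $2^{k}$ produced by freezing the weights cancel exactly by the exponent relation. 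Since the integrand defining $[u]_{s,p,\beta,q}$ is non-negative, the diagonal blocks $A_{k,q}\times A_{k,q}$ already give a lower bound, so the off-diagonal interactions may simply be discarded; the chief technical difficulty is instead controlling the annulus means $u_{A_{k,q}}$ introduced by the Poincar\'e step and reassembling them across neighbouring annuli.

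Finally, the dichotomy in the statement is only a matter of the admissible function class. When $\tfrac{1}{\tau}+\tfrac{\gamma}{Q}>0$ the exponent $p^{*}_{\beta}$ is positive and all the weighted quantities are finite for every $u\in C_{c}^{1}(\mathbb{G})$, giving \eqref{1in}; when $\tfrac{1}{\tau}+\tfrac{\gamma}{Q}<0$ the negative weight destroys integrability at the identity $e$, so one restricts to $u\in C_{c}^{1}(\mathbb{G}\setminus\{e\})$, on which the identical two-step argument (H\"older together with the weighted Sobolev estimate, now applied to functions supported away from $e$) produces \eqref{2in}. In either case, taking $\tau$-th roots of the H\"older bound and inserting the weighted Sobolev estimate completes the proof.
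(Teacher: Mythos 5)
You have correctly isolated the algebraic skeleton: with $\gamma=a\sigma+(1-a)\mu$, condition \eqref{1.2} does collapse to $\frac{1}{\tau}=\frac{a}{p^{*}_{\beta}}+\frac{1-a}{\alpha}$ with $\frac{1}{p^{*}_{\beta}}=\frac{1}{p}+\frac{\beta-s-\sigma}{Q}$, so your H\"older split reduces the theorem to the scale-invariant weighted estimate $\|q^{\sigma}u\|_{L^{p^{*}_{\beta}}(\mathbb{G})}\leq C[u]_{s,p,\beta,q}$, which is precisely the $a=1$ instance of the statement. In the regime $0\leq\beta-\sigma\leq s$ this is a legitimate refactoring of the paper's first case, but two caveats. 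The step you defer as ``the chief technical difficulty'' is in fact the bulk of the paper's proof: the comparison of adjacent annulus means \eqref{A_{k+1}-A_{k}} via Lemma \ref{GN1}, Lemma \ref{elin} with $\zeta$ chosen so that the telescoping ratio $c$ is strictly less than $1$ according to the sign of $\gamma\tau+Q$, the support anchors \eqref{supp} and \eqref{supp2} at the correct end of the sum, and the discrete inequality \eqref{kb}, whose applicability $\frac{a\tau}{p}+\frac{(1-a)\tau}{\alpha}\geq1$ is exactly equivalent to $\beta-\sigma\leq s$. Relatedly, your closing claim that the dichotomy $\frac{1}{\tau}+\frac{\gamma}{Q}\gtrless0$ is ``only a matter of the admissible function class'' is not accurate: the sign dictates in which direction the geometric factor $2^{(\gamma\tau+Q)k}$ decays, hence at which end the support condition must annihilate the boundary term of the telescoping sum; the function class is a consequence of this, not a substitute for it.

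The genuine gap, however, is that the theorem does not assume $\beta-\sigma\leq s$. By \eqref{4.2} that restriction is tied only to the critical case $\frac{1}{\tau}+\frac{\gamma}{Q}=\frac{1}{p}+\frac{\beta-s}{Q}$; since the left side of \eqref{1.2} is a convex combination of $\frac{1}{p}+\frac{\beta-s}{Q}$ and $\frac{1}{\alpha}+\frac{\mu}{Q}$, the regime $\beta-\sigma>s$ occurs whenever $a<1$ and these endpoint quantities differ, and the paper devotes the entire second half of its proof to it. Your reduction cannot reach this regime: when $\beta-\sigma>s$ one has $p^{*}_{\beta}<p$, and the intermediate inequality $\|q^{\sigma}u\|_{L^{p^{*}_{\beta}}(\mathbb{G})}\leq C[u]_{s,p,\beta,q}$ is then false. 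Indeed, by the very scaling computation you perform, each dilate $u(2^{-k}\,\cdot\,)$, suitably normalized, contributes comparably to both sides; summing $K$ such pieces with well-separated dyadic supports yields a left-hand side of order $K^{1/p^{*}_{\beta}}$ against a right-hand side of order $K^{1/p}$ (the off-diagonal interactions in the seminorm are of lower order for separated supports), contradicting the estimate as $K\to\infty$ since $p^{*}_{\beta}<p$. Any single global H\"older split of the weight produces such a scale-invariant intermediate estimate, so the failure is structural and cannot be repaired by adjusting $\sigma$. The missing idea is the paper's non-scale-invariant device: normalize $[u]_{s,p,\beta,q}$ and the weighted $L^{\alpha}$ norm to $1$, perturb $a$ to nearby parameters $a_{1},a_{2}$ with $\gamma_{1}=a_{1}\beta+(1-a_{1})\mu$, $\gamma_{2}=a_{2}(\beta-s)+(1-a_{2})\mu$ and exponents $\tau_{1},\tau_{2}>\tau$ as in \eqref{tau2}--\eqref{2.16}, split $\mathbb{G}$ into the unit quasi-ball $B_{1}$ and its complement, and apply H\"older together with the already-established case separately on each region. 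Without this, your proposal proves at most the case $\beta-\sigma\leq s$.
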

\begin{rem}
	In the Abelian case $(\mathbb{R}^{N},+)$ with the standard Euclidean distance instead of quasi-norm in Theorem \ref{CKN1}, we get the (Euclidean) fractional Caffarelli-Kohn-Nirenberg inequality (see, e.g. \cite{NS1}, Theorem 1.1).
\end{rem}

To prove the fractional weighted Caffarelli-Kohn-Nirenberg inequality on $\mathbb{G}$ we will use Theorem \ref{GN} in the proof of the following lemma.

\begin{lem}\label{GN1}
	Assume that $Q\geq2$, $s\in(0,1)$, $p>1$, $\alpha\geq1$, $\tau>0$, $a\in(0,1]$  and
	\begin{equation*}
	\frac{1}{\tau}\geq a\left(\frac{1}{p}-\frac{s}{Q}\right)+\frac{1-a}{\alpha}.
	\end{equation*}
	Let $\lambda>0$ and $0<r<R$ and set
	\begin{equation*}
	\Omega=\{x\in\mathbb{G}:\,\,\lambda r<q(x)<\lambda R\}.
	\end{equation*}
	Then, for every $u\in C^{1}(\overline{\Omega})$, we have
	\begin{equation}\label{scal}
	\left(\mint{-}_{\Omega}|u-u_{\Omega}|^{\tau}dx\right)^{\frac{1}{\tau}}\leq C_{r,R}\lambda^{\frac{a(sp-Q)}{p}}[u]_{s,p,q,\Omega}^{a}\left(\mint{-}_{\Omega}|u|^{\alpha}dx\right)^{\frac{1-a}{\alpha}},
	\end{equation}
	where $C_{r,R}$ is a positive constant independent of $u$ and $\lambda$.
\end{lem}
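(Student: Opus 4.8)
The plan is to reduce the scaled annulus $\Omega$ to the fixed unit annulus $\Omega_1 := \{x \in \mathbb{G} : r < q(x) < R\}$ by a dilation, and then to prove the resulting inequality on $\Omega_1$ by interpolating the fractional Sobolev inequality of Theorem \ref{sob} against the $L^{\alpha}$ norm. First I would set $v(x) := u(\lambda x)$ for $x \in \Omega_1$; since $D_{\lambda}$ is an automorphism with $q(\lambda x) = \lambda q(x)$, it maps $\Omega_1$ onto $\Omega$, and the change of variables $x = \lambda y$ (with $dx = \lambda^{Q}\,dy$) shows that the mean is preserved, $v_{\Omega_1} = u_{\Omega}$, that both averaged norms are dilation invariant,
\[
\Big(\mint{-}_{\Omega}|u - u_{\Omega}|^{\tau}\,dx\Big)^{1/\tau} = \Big(\mint{-}_{\Omega_1}|v - v_{\Omega_1}|^{\tau}\,dy\Big)^{1/\tau}, \qquad \Big(\mint{-}_{\Omega}|u|^{\alpha}\,dx\Big)^{1/\alpha} = \Big(\mint{-}_{\Omega_1}|v|^{\alpha}\,dy\Big)^{1/\alpha},
\]
while the Gagliardo seminorm picks up a power of $\lambda$: using $q\big((\lambda y')^{-1}\circ(\lambda y)\big) = \lambda\, q(y'^{-1}\circ y)$ one finds $[u]_{s,p,q,\Omega} = \lambda^{(Q-sp)/p}[v]_{s,p,q,\Omega_1}$. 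Consequently the factor $\lambda^{a(sp-Q)/p}$ in \eqref{scal} is exactly what makes the desired inequality scale invariant, so it suffices to prove the case $\lambda = 1$, namely the inequality on $\Omega_1$ with a constant $C_{r,R}$.

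Second, on the bounded domain $\Omega_1$ I would first reduce the hypothesis $\frac{1}{\tau} \ge a(\frac1p - \frac sQ) + \frac{1-a}{\alpha}$ to the borderline equality. Since $\Omega_1$ is bounded, averaged $L^{\tau}$ norms are monotone in $\tau$, so it is enough to treat $\tau = \tau_*$ defined by $\frac{1}{\tau_*} = a(\frac1p - \frac sQ) + \frac{1-a}{\alpha} = \frac{a}{p^*} + \frac{1-a}{\alpha}$, where $p^* = \frac{Qp}{Q-sp}$; the general case then follows with an extra factor depending only on $|\Omega_1|$, i.e.\ on $r,R$. For the borderline exponent I would mimic the proof of Theorem \ref{GN}: applying Hölder's inequality to $|u - u_{\Omega_1}|^{\tau_*} = |u - u_{\Omega_1}|^{a\tau_*}|u - u_{\Omega_1}|^{(1-a)\tau_*}$ with the conjugate exponents $\frac{p^*}{a\tau_*}$ and $\frac{\alpha}{(1-a)\tau_*}$ gives
\[
\Big(\int_{\Omega_1}|u - u_{\Omega_1}|^{\tau_*}\Big)^{1/\tau_*} \le \|u - u_{\Omega_1}\|_{L^{p^*}(\Omega_1)}^{a}\,\|u - u_{\Omega_1}\|_{L^{\alpha}(\Omega_1)}^{1-a}.
\]
The $L^{\alpha}$ factor is harmless: by the triangle and Jensen inequalities $\|u - u_{\Omega_1}\|_{L^{\alpha}(\Omega_1)} \le 2\|u\|_{L^{\alpha}(\Omega_1)}$, which after normalising by $|\Omega_1|$ produces the averaged $L^{\alpha}$ norm on the right of \eqref{scal}.

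Third, the genuine ingredient is the fractional Sobolev--Poincaré inequality on the annulus, $\|u - u_{\Omega_1}\|_{L^{p^*}(\Omega_1)} \le C\,[u]_{s,p,q,\Omega_1}$, which I expect to be the main obstacle, because $p^*$ is the critical exponent and a naive compactness argument (available only strictly below $p^*$) does not apply. I would obtain it by combining three facts: an extension operator for $W^{s,p,q}(\Omega_1)$ sending $u - u_{\Omega_1}$ to a compactly supported function $w$ on $\mathbb{G}$ with $[w]_{s,p,q} \le C\big([u]_{s,p,q,\Omega_1} + \|u - u_{\Omega_1}\|_{L^{p}(\Omega_1)}\big)$; the whole-space fractional Sobolev inequality of Theorem \ref{sob}, which bounds $\|u - u_{\Omega_1}\|_{L^{p^*}(\Omega_1)} \le \|w\|_{L^{p^*}(\mathbb{G})} \le C\,[w]_{s,p,q}$; and a subcritical fractional Poincaré inequality on $\Omega_1$ for zero-mean functions, $\|u - u_{\Omega_1}\|_{L^{p}(\Omega_1)} \le C\,[u]_{s,p,q,\Omega_1}$, used to absorb the lower-order term (this last one does follow from the compact embedding $W^{s,p,q}(\Omega_1)\hookrightarrow L^{p}(\Omega_1)$). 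Since subtracting the constant $u_{\Omega_1}$ leaves the seminorm unchanged, i.e.\ $[u - u_{\Omega_1}]_{s,p,q,\Omega_1} = [u]_{s,p,q,\Omega_1}$, this chain yields the Sobolev--Poincaré estimate, and inserting it into the borderline interpolation above completes the case $\lambda = 1$ and hence, by the scaling of the first paragraph, the lemma.
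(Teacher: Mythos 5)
Your proposal follows the same skeleton as the paper's proof: reduce to $\lambda=1$ by the dilation change of variables (your scaling identities for the mean, the averaged norms and the seminorm are exactly the computation the paper performs, only the paper does it at the end rather than the start), pass to a borderline exponent by Jensen/monotonicity of averaged norms on the finite-measure annulus, and then run the H\"older interpolation of Theorem \ref{GN} against a fractional Sobolev inequality. The genuine divergence is in how the key Sobolev-type ingredient on the annulus is justified. The paper lowers $s$ to some $s'\leq s$ (rather than, as you do, lowering $\tau$ to the borderline $\tau_{*}$ with $s$ fixed), and then simply \emph{applies} Theorem \ref{GN} to $u-u_{\Omega_{1}}$ with the domain seminorm $[u-u_{\Omega_{1}}]_{s',p,q,\Omega_{1}}$, finishing with the comparison $[u]_{s',p,q,\Omega_{1}}\leq C[u]_{s,p,q,\Omega_{1}}$ valid on bounded sets. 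Strictly speaking, Theorem \ref{GN} is stated for $u\in C^{1}_{c}(\mathbb{G})$ with the seminorm taken over $\mathbb{G}\times\mathbb{G}$, and extension by zero does not reduce the domain case to it (the global seminorm of $(u-u_{\Omega_{1}})\chi_{\Omega_{1}}$ contains cross terms over $\Omega_{1}\times(\mathbb{G}\setminus\Omega_{1})$ that are not controlled by $[u]_{s,p,q,\Omega_{1}}$), so the paper's step implicitly assumes a domain version of the fractional Sobolev inequality. You isolate precisely this as the missing ingredient --- the Sobolev--Poincar\'e bound $\|u-u_{\Omega_{1}}\|_{L^{p^{*}}(\Omega_{1})}\leq C[u]_{s,p,q,\Omega_{1}}$ --- and propose the standard Euclidean fix: a bounded extension operator for $W^{s,p,q}(\Omega_{1})$, the whole-space Theorem \ref{sob}, and a subcritical compactness Poincar\'e inequality to absorb the $L^{p}$ remainder. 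This is more honest about where the difficulty sits, but be aware of what it costs: neither the extension theorem nor the compact embedding $W^{s,p,q}(\Omega_{1})\hookrightarrow L^{p}(\Omega_{1})$ is established in this paper or its cited references in this generality, and since $q$ is only a continuous quasi-norm, the quasi-annulus need not be a Lipschitz (or even extension) domain, so these facts cannot just be imported from the Euclidean literature. In short, your argument is complete modulo two domain-level tools that are standard in $\mathbb{R}^{N}$ but unproved here, whereas the paper's argument is shorter because it silently assumes an equivalent domain fact; your route has the merit of making that hidden assumption explicit.
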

\begin{proof}[Proof of Lemma \ref{GN1}]
	Without loss of generality, we assume that   $0 < s' \leq s$ and $\tau'  \geq \tau$ are such that
	\begin{equation*}
	\frac{1}{\tau'}= a\left(\frac{1}{p}-\frac{s'}{Q}\right)+\frac{1-a}{\alpha},
	\end{equation*}
and $\lambda=1$, then let $\Omega_{1}$ be
\begin{equation*}
	\Omega_{1}=\{x\in\mathbb{G}:\,\, r<q(x)< R\}.
	\end{equation*}
	By using Theorem \ref{GN}, Jensen's inequality and $[u]_{s',p,q,\Omega}\leq C [u]_{s,p,q,\Omega}$, we get
\begin{multline}\label{4.6}	
\left(\mint{-}_{\Omega_{1}}|u-u_{\Omega_{1}}|^{\tau}dx\right)^{\frac{1}{\tau}}=\frac{1}{|\Omega_{1}|^{\frac{1}{\tau}}}\|u-u_{\Omega_{1}}\|_{\tau}\leq C_{r,R}\|u-u_{\Omega_{1}}\|_{L^{\tau'}(\Omega_{1})}\\
\leq C_{r,R} [u-u_{\Omega_{1}}]_{s',p,q,\Omega_{1}}^{a}\|u\|^{1-a}_{L^{\alpha}(\Omega_{1})}\\
\leq C_{r,R} \left(\int_{\Omega_{1}}\int_{\Omega_{1}}\frac{|u(x)-u_{\Omega_{1}}-u(y)+u_{\Omega_{1}}|^{p}}{q^{Q+s'p}(y^{-1}\circ x)}dxdy\right)^{\frac{a}{p}}\|u\|^{1-a}_{L^{\alpha}(\Omega_{1})}\\
\leq C_{r,R}[u]^{a}_{s,p,q,\Omega_{1}}\|u\|^{1-a}_{L^{\alpha}(\Omega_{1})}\leq C_{r,R}[u]^{a}_{s,p,q,\Omega_{1}}\left(\mint{-}_{\Omega_{1}}|u|^{\alpha}dx\right)^{\frac{1-a}{\alpha}},
\end{multline}
where $C_{r,R}>0$. Let us set $u(\lambda x)$ instead of $u(x)$, then
\begin{multline}	
\left(\mint{-}_{\Omega_{1}}\left|u(\lambda x)-\mint{-}_{\Omega_{1}}u(\lambda x)dx\right|^{\tau}dx\right)^{\frac{1}{\tau}} \leq C_{r,R} \left(\int_{\Omega_{1}}\int_{\Omega_{1}}\frac{|u(\lambda x)-u(\lambda y)|^{p}}{q^{Q+sp}(y^{-1}\circ x)}dxdy\right)^{\frac{a}{p}}\\ \times\left(\frac{1}{|\Omega_{1}|}\int_{\Omega_{1}}|u(\lambda x)|^{\alpha}dx\right)^{\frac{1-a}{\alpha}}.
\end{multline}
Thus, we compute
\begin{multline}	
\left(\mint{-}_{\Omega}\left|u(x)-\mint{-}_{\Omega}u(x)dx\right|^{\tau}dx\right)^{\frac{1}{\tau}}=\left(\frac{1}{|\Omega|}\int_{\Omega}\left|u(x)-\frac{1}{|\Omega|}\int_{\Omega}u(x)dx\right|^{\tau}dx\right)^{\frac{1}{\tau}}\\
=\left(\frac{1}{|\Omega|}\int_{\Omega}\left|u(\lambda y)-\frac{1}{|\Omega|}\int_{\Omega}u(\lambda y)d(\lambda y)\right|^{\tau}d(\lambda y)\right)^{\frac{1}{\tau}}\\
=\left(\frac{1}{|\Omega_{1}|}\int_{\Omega_{1}}\frac{\lambda^{Q}}{\lambda^{Q}}\left|u(\lambda y)-\frac{\lambda^{Q}}{\lambda^{Q}|\Omega_{1}|}\int_{\Omega_{1}}u(\lambda y)d y\right|^{\tau}d y\right)^{\frac{1}{\tau}}\\
=\left(\frac{1}{|\Omega_{1}|}\int_{\Omega_{1}}\left|u(\lambda y)-\frac{1}{|\Omega_{1}|}\int_{\Omega_{1}}u(\lambda y)dy\right|^{\tau}dy\right)^{\frac{1}{\tau}}\\
\leq C_{r,R} \left(\int_{\Omega_{1}}\int_{\Omega_{1}}\frac{|u(\lambda x)-u(\lambda y)|^{p}}{q^{Q+sp}(y^{-1}\circ x)}dxdy\right)^{\frac{a}{p}}\left(\frac{1}{|\Omega_{1}|}\int_{\Omega_{1}}|u(\lambda x)|^{\alpha}dx\right)^{\frac{1-a}{\alpha}}\\
= C_{r,R} \left(\int_{\Omega_{1}}\int_{\Omega_{1}}\frac{\lambda^{2Q}\lambda^{Q+sp}|u(\lambda x)-u(\lambda y)|^{p}}{\lambda^{2Q}\lambda^{Q+sp}q^{Q+sp}(y^{-1}\circ x)}dxdy\right)^{\frac{a}{p}}\left(\frac{1}{|\Omega_{1}|}\int_{\Omega_{1}}\frac{\lambda^{Q}}{\lambda^{Q}}|u(\lambda x)|^{\alpha}dx\right)^{\frac{1-a}{\alpha}}\\
=C_{r,R} \left(\int_{\Omega}\int_{\Omega}\frac{\lambda^{sp-Q}|u(\lambda x)-u(\lambda y)|^{p}}{q^{Q+sp}((\lambda y)^{-1}\circ \lambda x)}d(\lambda x)d (\lambda y)\right)^{\frac{a}{p}}\left(\frac{1}{|\Omega|}\int_{\Omega}|u(\lambda x)|^{\alpha}d(\lambda x)\right)^{\frac{1-a}{\alpha}}\\
=C_{r,R} \left(\int_{\Omega}\int_{\Omega}\frac{\lambda^{sp-Q}|u(x)-u(y)|^{p}}{q^{Q+sp}(y^{-1}\circ x)}dxd y\right)^{\frac{a}{p}}\left(\frac{1}{|\Omega|}\int_{\Omega}|u(x)|^{\alpha}dx\right)^{\frac{1-a}{\alpha}}\\
=C_{r,R}\lambda^{\frac{a(sp-Q)}{p}}[u]^{a}_{s,p,q,\Omega}\left(\frac{1}{|\Omega|}\int_{\Omega}|u(x)|^{\alpha}dx\right)^{\frac{1-a}{\alpha}}.
\end{multline}
The proof of Lemma \ref{GN1} is complete.
\end{proof}

\begin{proof}[Proof of Theorem \ref{CKN1}]

First let us consider the case \eqref{4.2}, that is, $\beta-\sigma\leq s$ and $\frac{1}{\tau}+\frac{\gamma}{Q}=\frac{1}{p}+\frac{\beta-s}{Q}$. By using Lemma \ref{GN1} with $\lambda=2^{k}$, $r=1,$ $R=2$ and $\Omega=A_{k,q}$, we get
\begin{equation}\label{2.3}
\left(\mint{-}_{A_{k,q}}|u-u_{A_{k,q}}|^{\tau}dx\right)^{\frac{1}{\tau}}
\leq C 2^{\frac{ak(sp-Q)}{p}}[u]_{s,p,q,A_{k,q}}^{a}\left(\mint{-}_{A_{k,q}}|u|^{\alpha}dx\right)^{\frac{1-a}{\alpha}},
\end{equation}
where $A_{k,q}$ is defined in \eqref{Ak} and $k\in\mathbb{Z}$. Now by using \eqref{2.3} we obtain
$$\int_{A_{k,q}}|u|^{\tau}dx=\int_{A_{k,q}}|u-u_{A_{k,q}}+u_{A_{k,q}}|^{\tau}dx\\
\leq C\left(\int_{A_{k,q}}|u_{A_{k,q}}|^{\tau}dx+\int_{A_{k,q}}|u-u_{A_{k,q}}|^{\tau}dx\right)$$
\begin{multline}\label{ocenka}
 =C\left(\int_{A_{k,q}}|u_{A_{k,q}}|^{\tau}dx+\frac{|A_{k,q}|}{|A_{k,q}|}\int_{A_{k,q}}|u-u_{A_{k,q}}|^{\tau}dx\right)\\
 =C\left(|A_{k,q}||u_{A_{k,q}}|^{\tau}+|A_{k,q}|\mint{-}_{A_{k,q}}|u-u_{A_{k,q}}|^{\tau}dx\right)\\
 \leq C\left(|A_{k,q}||u_{A_{k,q}}|^{\tau}+2^{\frac{ak(sp-Q)\tau}{p}}|A_{k,q}|[u]_{s,p,q,A_{k,q}}^{a\tau}\left(\frac{1}{|A_{k,q}|}\int_{A_{k,q}}|u|^{\alpha}dx\right)^{\frac{(1-a)\tau}{\alpha}}\right)\\
  \leq C \left(2^{Q k}|u_{A_{k,q}}|^{\tau}+2^{\frac{ak(sp-Q)\tau}{p}}2^{kQ}2^{-\frac{Q(1-a)\tau k}{\alpha}}[u]_{s,p,q,A_{k,q}}^{a\tau}\|u\|^{(1-a)\tau}_{L^{\alpha}(A_{k,q})}\right).
\end{multline}
Then, from \eqref{ocenka} we get
\begin{multline}
\int_{A_{k,q}}q^{\gamma\tau}(x)|u|^{\tau}dx\leq2^{(k+1)\gamma\tau}\int_{A_{k,q}}|u|^{\tau}dx\leq C2^{(Q+\gamma\tau)k}|u_{A_{k,q}}|^{\tau}\\
+C2^{\gamma\tau k}2^{kQ}2^{\frac{ak(sp-Q)\tau}{p}}2^{-\frac{Q(1-a)\tau k}{\alpha}}[u]^{a\tau}_{s,p,q,A_{k,q}}\|u\|^{(1-a)\tau}_{L^{\alpha}(A_{k,q})}=C2^{(Q+\gamma\tau)k}|u_{A_{k,q}}|^{\tau}\\
+C2^{\left(\gamma\tau+Q+\frac{a(sp-Q)\tau}{p}-\frac{Q(1-a)\tau}{\alpha}\right) k}\left(\int_{A_{k,q}}\int_{A_{k,q}}\frac{2^{kp\beta_{1}}2^{kp\beta_{2}}|u(x)-u(y)|^{p}}{2^{kp\beta}q^{Q+sp}(y^{-1}\circ x)}dxdy\right)^{\frac{a\tau}{p}}\\
\times\left(\int_{A_{k,q}}\frac{2^{k\alpha\mu}}{2^{k\alpha\mu}}|u(x)|^{\alpha}dx\right)^{\frac{(1-a)\tau}{\alpha}}\leq C2^{(Q+\gamma\tau)k}|u_{A_{k,q}}|^{\tau}\\
+C2^{\left(\gamma\tau+Q+\frac{a(sp-Q)\tau}{p}-\frac{Q(1-a)\tau}{\alpha}-a\beta\tau-\mu\tau(1-a)\right) k}\left(\int_{A_{k,q}}\int_{A_{k,q}}\frac{q^{p\beta_{1}}(x)q^{p\beta_{2}}(y)|u(x)-u(y)|^{p}}{q^{Q+sp}(y^{-1}\circ x)}dxdy\right)^{\frac{a\tau}{p}}\\
\times\left(\int_{A_{k,q}}q^{\alpha\mu}(x)|u(x)|^{\alpha}dx\right)^{\frac{(1-a)\tau}{\alpha}}\leq C2^{(Q+\gamma\tau)k}|u_{A_{k,q}}|^{\tau}\\
+C2^{\left(\gamma\tau+Q+\frac{a(sp-Q)\tau}{p}-\frac{Q(1-a)\tau}{\alpha}-a\beta\tau-\mu\tau(1-a)\right) k}[u]_{s,p,\beta,q,A_{k,q}}^{a\tau}\|q^{\mu}(x)u\|^{(1-a)\tau}_{L^{\alpha}(A_{k,q})}.
\end{multline}
Here by \eqref{1.2}, we have
\begin{multline}
\gamma\tau+Q+\frac{a(sp-Q)\tau}{p}-\frac{Q(1-a)\tau}{\alpha}-a\beta\tau-\mu\tau(1-a)\\
=Q\tau\left(\frac{\gamma}{Q}+\frac{1}{\tau}+\frac{a(sp-Q)}{Qp}-\frac{(1-a)}{\alpha}-\frac{a\beta}{Q}-\frac{\mu(1-a)}{Q}\right)\\
=Q\tau\left(a\left(\frac{1}{p}+\frac{\beta-s}{Q}\right)+(1-a)\left(\frac{1}{\alpha}+\frac{\mu}{Q}\right)+\frac{a(sp-Q)}{Qp}-\frac{(1-a)}{\alpha}-\frac{a\beta}{Q}-\frac{\mu(1-a)}{Q}\right)\\
=0.
\end{multline}
Thus, we obtain
\begin{equation}\label{2.4}
\int_{A_{k,q}}q^{\gamma \tau}(x)|u|^{\tau}dx\leq C2^{(\gamma\tau+Q)k}|u_{A_{k,q}}|^{\tau}+C[u]^{a\tau}_{s,p,\beta,q,A_{k,q}}\|q^{\mu}(x)u\|^{(1-a)\tau}_{L^{\alpha}(A_{k,q})},
\end{equation}
and by summing over $k$ from $m$ to $n$, we get
  \begin{multline}\label{2.5}
\int_{\cup_{k=m}^{n} A_{k,q}}q^{\gamma \tau}(x)|u|^{\tau}dx=\int_{\{2^{m}<q(x)<2^{n+1}\}}q^{\gamma \tau}(x)|u|^{\tau}dx\leq C\sum_{k=m}^{n}2^{(\gamma\tau+Q)k}|u_{A_{k,q}}|^{\tau}\\
+C\sum_{k=m}^{n}[u]^{a\tau}_{s,p,\beta,q,A_{k,q}}\|q^{\mu}(x)u\|^{(1-a)\tau}_{L^{\alpha}(A_{k,q})},
\end{multline}
where $k,m,n\in\mathbb{Z}$ and $m\leq n-2$.

To prove \eqref{1in} let us choose $n$ such that
\begin{equation}\label{supp}
\text{supp}\,u\subset B_{2^{n}},
\end{equation}
where $B_{2^{n}}$ is a quasi-ball of $\mathbb{G}$ with the radius $2^{n}$.

The following known inequality will be used in the proof.
\begin{lem}[Lemma 2.2, \cite{NS2}]\label{elin}
	Let $\xi>1$ and $\eta>1$. Then exists a positive constant $C$ depending $\xi$ and $\eta$ such that $1<\zeta<\xi$,
	\begin{equation}
	(|a|+|b|)^{\eta}\leq \zeta|a|^{\eta}+\frac{C}{(\zeta-1)^{\eta-1}}|b|^{\eta},\,\,\,\,\,\forall\,\,a,b\in\mathbb{R}.
	\end{equation}
\end{lem}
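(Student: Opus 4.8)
The plan is to deduce this elementary inequality from the convexity of $t\mapsto t^{\eta}$ on $[0,\infty)$, combined with a scaling choice that makes the dependence on $\zeta$ explicit. Writing $x=|a|\geq 0$ and $y=|b|\geq 0$, it clearly suffices to establish
\[
(x+y)^{\eta}\leq \zeta\,x^{\eta}+\frac{C}{(\zeta-1)^{\eta-1}}\,y^{\eta}\qquad\text{for all }x,y\geq 0,
\]
with $C=C(\xi,\eta)$ independent of $\zeta\in(1,\xi)$. The first step is Jensen's inequality: for any $\theta\in(0,1)$ I would decompose $x+y=\theta\bigl(x/\theta\bigr)+(1-\theta)\bigl(y/(1-\theta)\bigr)$ and apply convexity of $t\mapsto t^{\eta}$ (valid since $\eta>1$) to obtain
\[
(x+y)^{\eta}\leq \theta^{1-\eta}x^{\eta}+(1-\theta)^{1-\eta}y^{\eta}.
\]

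Next I would fix the free weight so that the coefficient of $x^{\eta}$ equals $\zeta$, namely $\theta=\zeta^{-1/(\eta-1)}\in(0,1)$, which gives $\theta^{1-\eta}=\zeta$. The coefficient of $y^{\eta}$ then becomes $(1-\zeta^{-1/(\eta-1)})^{-(\eta-1)}$, and the whole lemma reduces to the single scalar estimate
\[
\left(\frac{\zeta-1}{1-\zeta^{-1/(\eta-1)}}\right)^{\eta-1}\leq C,\qquad 1<\zeta<\xi.
\]
The case $x=0$ needs no separate treatment, since $1-\theta\in(0,1)$ forces $(1-\theta)^{1-\eta}\geq 1$, so the chosen coefficient already dominates $y^{\eta}$.

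The only genuine point of the argument — and therefore the main obstacle — is to check that the function $g(\zeta)=(\zeta-1)/\bigl(1-\zeta^{-1/(\eta-1)}\bigr)$ remains \emph{bounded} as $\zeta\to 1^{+}$; in other words, that the sharp coefficient blows up at precisely the rate $(\zeta-1)^{-(\eta-1)}$ and no faster, which is exactly what the stated form of $C/(\zeta-1)^{\eta-1}$ encodes. Writing $\zeta=1+\delta$ and expanding $\zeta^{-1/(\eta-1)}=1-\tfrac{\delta}{\eta-1}+O(\delta^{2})$ shows $1-\zeta^{-1/(\eta-1)}\sim \delta/(\eta-1)$, whence $g(\zeta)\to\eta-1$ as $\zeta\to 1^{+}$. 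Since $g$ is continuous on $(1,\xi)$, extends continuously to $[1,\xi]$ by this finite limit, and takes a finite value at $\zeta=\xi$, it is bounded on the compact interval $[1,\xi]$. Taking $C=\bigl(\sup_{1<\zeta<\xi}g(\zeta)\bigr)^{\eta-1}$ then yields the desired uniform constant and completes the proof.
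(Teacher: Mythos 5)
Your proof is correct: the weighted convexity step $(x+y)^{\eta}\leq \theta^{1-\eta}x^{\eta}+(1-\theta)^{1-\eta}y^{\eta}$, the choice $\theta=\zeta^{-1/(\eta-1)}$ making the first coefficient exactly $\zeta$, and the verification that $g(\zeta)=(\zeta-1)/\bigl(1-\zeta^{-1/(\eta-1)}\bigr)$ stays bounded on $(1,\xi)$ (with limit $\eta-1$ as $\zeta\to1^{+}$) together yield the inequality with $C=C(\xi,\eta)$ uniform in $\zeta$, which is precisely what is claimed. Note that the paper does not prove this lemma at all — it imports it as Lemma 2.2 of \cite{NS2} — and your argument is essentially the standard proof of that cited result, so there is nothing in the paper to diverge from.
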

Let us consider the following integral
\begin{multline*}
\mint{-}_{A_{k+1,q}\cup A_{k,q}}\left|u-\mint{-}_{A_{k+1,q}\cup A_{k,q}}u\right|^{\tau}dx\\=\frac{1}{|A_{k+1,q}|+|A_{k,q}|}\int_{A_{k+1,q}\cup A_{k,q}}\left|u-\mint{-}_{A_{k+1,q}\cup A_{k,q}}u\right|^{\tau}dx\\
=\frac{1}{|A_{k+1,q}|+|A_{k,q}|}\left(\int_{A_{k+1,q}}\left|u-\mint{-}_{A_{k+1,q}\cup A_{k,q}}u\right|^{\tau}dx+\int_{A_{k,q}}\left|u-\mint{-}_{A_{k+1,q}\cup A_{k,q}}u\right|^{\tau}dx\right).
\end{multline*}
On the other hand, a direct calculation gives
\begin{multline}\label{A_{k+1}-A_{k}}
\mint{-}_{A_{k+1,q}\cup A_{k,q}}\left|u-\mint{-}_{A_{k+1,q}\cup
A_{k,q}}u\right|^{\tau}dx\\
=\frac{1}{|A_{k+1,q}|+|A_{k,q}|}\left(\int_{A_{k+1,q}}\left|u-\mint{-}_{A_{k+1,q}\cup A_{k,q}}u\right|^{\tau}dx+\int_{A_{k,q}}\left|u-\mint{-}_{A_{k+1,q}\cup A_{k,q}}u\right|^{\tau}dx\right)\\
\geq\frac{1}{|A_{k+1,q}|+|A_{k,q}|}\int_{A_{k,q}}\left|u-\mint{-}_{A_{k+1,q}\cup A_{k,q}}u\right|^{\tau}dx\\
\geq\frac{1}{|A_{k+1,q}|+|A_{k,q}|}\left|\int_{A_{k,q}}\left(u-\mint{-}_{A_{k+1,q}\cup A_{k,q}}u\right)dx\right|^{\tau}\\
=\frac{1}{|A_{k+1,q}|+|A_{k,q}|}\left|\int_{A_{k,q}}udx-\frac{|A_{k,q}|}{|A_{k+1,q}|+|A_{k,q}|}\int_{A_{k,q}}udx-\frac{|A_{k,q}|}{|A_{k+1,q}|+|A_{k,q}|}\int_{A_{k+1,q}}udx\right|^{\tau}\\
=\frac{1}{|A_{k+1,q}|+|A_{k,q}|}\left|\frac{|A_{k+1,q}|}{|A_{k+1,q}|+|A_{k,q}|}\int_{A_{k,q}}udx-\frac{|A_{k,q}|}{|A_{k+1,q}|+|A_{k,q}|}\int_{A_{k+1,q}}udx\right|^{\tau}\\
=\frac{1}{(|A_{k+1,q}|+|A_{k,q}|)^{2}}\left||A_{k+1,q}|\int_{A_{k,q}}udx-|A_{k,q}|\int_{A_{k+1,q}}udx\right|^{\tau}\\
=\frac{|A_{k+1,q}||A_{k,q}|}{(|A_{k+1,q}|+|A_{k,q}|)^{2}}\left|\frac{1}{|A_{k,q}|}\int_{A_{k,q}}udx-\frac{1}{|A_{k+1,q}|}\int_{A_{k+1,q}}udx\right|^{\tau}\\
=\frac{|A_{k+1,q}||A_{k,q}|}{(|A_{k+1,q}|+|A_{k,q}|)^{2}}|u_{A_{k+1,q}}-u_{A_{k,q}}|^{\tau}\geq C\frac{2^{Qk}2^{Q(k-1)}}{(2^{Qk}-2^{Q(k-1)})^{2}}|u_{A_{k+1,q}}-u_{A_{k,q}}|^{\tau}\\
\geq C\frac{2^{2Qk}2^{-Q}}{2^{2kQ}(1+2^{-Q})^{2}}|u_{A_{k+1,q}}-u_{A_{k,q}}|^{\tau}\geq C |u_{A_{k+1,q}}-u_{A_{k,q}}|^{\tau}.
\end{multline}
From \eqref{A_{k+1}-A_{k}} and Lemma \ref{GN1}, we obtain
\begin{multline}
|u_{A_{k+1,q}}-u_{A_{k,q}}|^{\tau}\leq C\mint{-}_{A_{k+1,q}\cup A_{k,q}}\left|u-\mint{-}_{A_{k+1,q}\cup A_{k,q}}u\right|^{\tau}dx\\
\leq C2^{\frac{a k(sp-Q)}{p}}[u]_{s,p,q,A_{k+1,q}\cup A_{k,q}}^{\tau a}\left(\mint{-}_{A_{k+1,q}\cup A_{k,q}}|u|^{\alpha}dx\right)^{\frac{(1-a)\tau}{\alpha}}.
\end{multline}
By using this fact, taking $\tau=1$ we have
\begin{multline}
|u_{A_{k,q}}|\leq|u_{A_{k+1,q}}-u_{A_{k,q}}|+|u_{A_{k+1,q}}|\\
\leq |u_{A_{k+1,q}}| +C2^{\frac{a k(sp-Q)}{p}}[u]_{s,p,q,A_{k+1,q}\cup A_{k,q}}^{ a}\left(\mint{-}_{A_{k+1,q}\cup A_{k,q}}|u|^{\alpha}dx\right)^{\frac{(1-a)}{\alpha}},
\end{multline}
and by using Lemma \ref{elin} with $\eta=\tau$, $\zeta=2^{\gamma\tau+Q}c$, where $c=\frac{2}{1+2^{\gamma\tau+Q}}<1$, since $\gamma\tau+Q>0$, we have
\begin{equation*}
2^{(\gamma\tau+Q)k}|u_{A_{k,q}}|^{\tau}\leq c 2^{(k+1)(\gamma\tau+Q)}|u_{A_{k+1,q}}|^{\tau}+C [u]^{\tau a}_{s,p,\beta,q,A_{k+1,q}\cup A_{k,q}}\|q^{\mu}(x)u\|^{(1-a)\tau}_{L^{\alpha}(A_{k+1,q}\cup A_{k,q})}.
\end{equation*}
By summing over $k$ from $m$ to $n$ and by using $\eqref{supp}$ we have
\begin{multline}\label{4.20}
\sum_{k=m}^{n}2^{(\gamma\tau+Q)k}|u_{A_{k,q}}|^{\tau}\leq \sum_{k=m}^{n} c 2^{(k+1)(\gamma\tau+Q)}|u_{A_{k+1,q}}|^{\tau}\\
+C \sum_{k=m}^{n}[u]^{\tau a}_{s,p,\beta,q,A_{k+1,q}\cup A_{k,q}}\|q^{\mu}(x)u\|^{(1-a)\tau}_{L^{\alpha}(A_{k+1,q}\cup A_{k,q})}.
\end{multline}
By using \eqref{4.20}, we compute
\begin{multline}
(1-c)\sum_{k=m}^{n}2^{(\gamma\tau+Q)k}|u_{A_{k,q}}|^{\tau}\leq 2^{(\gamma\tau+Q)m}|u_{A_{m,q}}|^{\tau}+(1-c)\sum_{k=m+1}^{n}2^{(\gamma\tau+Q)k}|u_{A_{k,q}}|^{\tau} \\
\leq C \sum_{k=m}^{n}[u]^{\tau a}_{s,p,\beta,q,A_{k+1,q}\cup A_{k,q}}\|q^{\mu}(x)u\|^{(1-a)\tau}_{L^{\alpha}(A_{k+1,q}\cup A_{k,q})}.
\end{multline}
This yields 
\begin{equation}\label{2.6}
\sum_{k=m}^{n}2^{(\gamma\tau+Q)k}|u_{A_{k,q}}|^{\tau}\leq C\sum_{k=m}^{n}[u]^{\tau a}_{s,p,\beta,q,A_{k+1,q}\cup A_{k,q}}\|q^{\mu}(x)u\|^{(1-a)\tau}_{L^{\alpha}(A_{k+1,q}\cup A_{k,q})}.
\end{equation}
From \eqref{2.5} and \eqref{2.6}, we have
\begin{equation}\label{2.5.6}
\int_{\{2^{m}<q(x)<2^{n+1}\}}q^{\gamma \tau}(x)|u|^{\tau}dx\leq C \sum_{k=m}^{n}[u]^{\tau a}_{s,p,\beta,q,A_{k+1,q}\cup A_{k,q}}\|q^{\mu}(x)u\|^{(1-a)\tau}_{L^{\alpha}(A_{k+1,q}\cup A_{k,q})}.
\end{equation}
Let $s,t\geq0$ be such that $s+t\geq1$. Then for any $x_{k},y_{k}\geq0$, we have
\begin{equation}\label{kb}
\sum_{k=m}^{n}x_{k}^{s}y_{k}^{t}\leq
\left(\sum_{k=m}^{n}x_{k}\right)^{s}
\left(\sum_{k=m}^{n}y_{k}\right)^{t}.
\end{equation}
By using this inequality in \eqref{2.5.6} with $s=\frac{\tau a}{p}$, $t=\frac{(1-a)\tau}{\alpha}$, $\frac{a}{p}+\frac{1-a}{\alpha}\geq \frac{1}{\tau}$ and $s\geq\beta-\sigma$, we obtain
\begin{equation}
\int_{\{q(x)>2^{m}\}}q^{\gamma \tau}(x)|u|^{\tau}dx\leq C[u]^{a\tau}_{s,p,\beta,q,\cup_{k=m}^{\infty}A_{k,q}}\|q^{\mu}(x)u\|^{(1-a)\tau}_{L^{\alpha}(\cup_{k=m}^{\infty}A_{k,q})}.
\end{equation}
Inequality \eqref{1in} is proved.

Let us prove \eqref{2in}. The strategy of the proof is similar to the previous case.
Choose $m$ such that
\begin{equation}\label{supp2}
\text{supp}\, u\cap B_{2^{m}}=\emptyset.
\end{equation}
From Lemma \ref{GN1} we have
$$|u_{A_{k+1,q}}-u_{A_{k,q}}|^{\tau}\leq C2^{\frac{a\tau k(sp-Q)}{p}}[u]_{s,p,q,A_{k+1,q}\cup A_{k,q}}^{\tau a}\left(\mint{-}_{A_{k+1,q}\cup A_{k,q}}|u|^{\alpha}dx\right)^{\frac{(1-a)\tau}{\alpha}}.$$
By Lemma \ref{elin} and choosing $c=\frac{1+2^{\gamma\tau+Q}}{2}<1$, since $\gamma\tau+Q<0$, we have
\begin{equation*}
2^{(\gamma\tau+Q)(k+1)}|u_{A_{k+1,q}}|^{\tau}\leq c 2^{k(\gamma\tau+Q)}|u_{A_{k,q}}|^{\tau}+C [u]^{\tau a}_{s,p,\beta,q,A_{k+1,q}\cup A_{k,q}}\|q^{\mu}(x)u\|^{(1-a)\tau}_{L^{\alpha}(A_{k+1,q}\cup A_{k,q})},
\end{equation*}
and by summing over $k$ from $m$ to $n$  and by using \eqref{supp2} we obtain
\begin{equation}\label{2.9}
\sum_{k=m}^{n}2^{(\gamma\tau+Q)k}|u_{A_{k,q}}|^{\tau}\leq C\sum_{k=m-1}^{n-1}[u]^{\tau a}_{s,p,\beta,q,A_{k+1,q}\cup A_{k,q}}\|q^{\mu}(x)u\|^{(1-a)\tau}_{L^{\alpha}(A_{k+1,q}\cup A_{k,q})}.
\end{equation}
From \eqref{2.5} and \eqref{2.9}, we establish that
\begin{equation}\label{2.5.9}
\int_{\{2^{m}<q(x)<2^{n+1}\}}q^{\gamma \tau}(x)|u|^{\tau}dx\leq C \sum_{k=m-1}^{n-1}[u]^{\tau a}_{s,p,\beta,q,A_{k+1,q}\cup A_{k,q}}\|q^{\mu}(x)u\|^{(1-a)\tau}_{L^{\alpha}(A_{k+1,q}\cup A_{k,q})}.
\end{equation}
Now by using \eqref{kb} we get
\begin{equation}
\int_{\{q(x)<2^{n+1}\}}q^{\gamma \tau}(x)|u|^{\tau}dx\leq C [u]^{\tau a}_{s,p,\beta,q,\cup_{k=-\infty}^{n}A_{k,q}}\|q^{\mu}(x)u\|^{(1-a)\tau}_{L^{\alpha}(\cup_{k=-\infty}^{n}A_{k,q})}.
\end{equation}
The proof of the case $s\geq\beta-\sigma$ is complete.

Let us prove the case of $\beta-\sigma>s$. Without loss of generality, we assume that
\begin{equation}
[u]_{s,p,\beta,q}=\|u\|_{L^{\alpha}(\mathbb{G})}=1,
\end{equation}
where
$$\frac{1}{p}+\frac{\beta-s}{Q}\neq\frac{1}{\alpha}+\frac{\mu}{Q}.$$
We also assume that $a_{1}>0,\,\,1>a_{2}$ and $\tau_{1},\,\tau_{2}>0$ with
\begin{equation}\label{tau2}
\frac{1}{\tau_{2}}=\frac{a_{2}}{p}+\frac{1-a_{2}}{\alpha},
\end{equation}
and
\begin{multline}\label{tau1}
\,\,\,\,\,\,\,\,\,\,\,\,\,\,\,\,\,\,\,\,\,\,\,\,\,\,\,\,\,\,\,\,\text{if} \,\,\,\,\,\,\, \frac{a}{p}+\frac{1-a}{\alpha}-\frac{as}{Q}>0,\,\,\,\,\text{then}\,\,\,\frac{1}{\tau_{1}}=\frac{a_{1}}{p}+\frac{1-a_{1}}{\alpha}-\frac{a_{1}s}{Q},\\
\,\,\,\,\,\,\,\,\,\,\,\,\,\,\,\,\,\,\,\,\,\,\,\,\,\,\,\,\,\,\,\,\text{if} \,\,\,\,\,\,\,\frac{a}{p}+\frac{1-a}{\alpha}-\frac{as}{Q}\leq0,\,\,\,\,\text{then}\,\,\,\frac{1}{\tau}>\frac{1}{\tau_{1}}\geq\frac{a_{1}}{p}+\frac{1-a_{1}}{\alpha}-\frac{a_{1}s}{Q}.
\end{multline}
Taking $\gamma_{1}=a_{1}\beta+(1-a_{1})\mu$ and $\gamma_{2}=a_{2}(\beta-s)+(1-a_{2})\mu$, we obtain
\begin{equation}\label{2.11}
\frac{1}{\tau_{1}}+\frac{\gamma_{1}}{Q}\geq a_{1}\left(\frac{1}{p}+\frac{\beta-s}{Q}\right)+(1-a_{1})\left(\frac{1}{\alpha}+\frac{\mu}{Q}\right)
\end{equation}
and
\begin{equation}\label{2.12}
\frac{1}{\tau_{2}}+\frac{\gamma_{2}}{Q}= a_{2}\left(\frac{1}{p}+\frac{\beta-s}{Q}\right)+(1-a_{2})\left(\frac{1}{\alpha}+\frac{\mu}{Q}\right).
\end{equation}
Let $a_{1}$ and $a_{2}$ be such that
\begin{equation}\label{2.14}
|a-a_{1}|\,\,\,\text{and}\,\,\,|a-a_{2}|\,\,\, \text{are small enough},
\end{equation}
\begin{equation}\label{2.15}
a_{2}<a<a_{1},\,\,\,\text{if}\,\,\,\frac{1}{p}+\frac{\beta-s}{Q}>\frac{1}{\alpha}+\frac{\mu}{Q},\,\,\,
\end{equation}
\begin{equation}\label{2.16}
a_{1}<a<a_{2},\,\,\,\text{if}\,\,\,\frac{1}{p}+\frac{\beta-s}{Q}<\frac{1}{\alpha}+\frac{\mu}{Q}.
\end{equation}
By using \eqref{2.14}-\eqref{2.16} in \eqref{2.11}, \eqref{2.12} and \eqref{1.2}, we establish
\begin{equation}
\frac{1}{\tau_{1}}+\frac{\gamma_1}{Q}>\frac{1}{\tau}+\frac{\gamma}{Q}>\frac{1}{\tau_{2}}+\frac{\gamma_2}{Q}>0.
\end{equation}
From \eqref{tau1} in the case $\frac{a}{p}+\frac{1-a}{\alpha}-\frac{as}{Q}>0$ with $a>0$, $\beta-\sigma>s$ and  \eqref{2.14}, we get
\begin{equation}\label{2.19}
\frac{1}{\tau}-\frac{1}{\tau_{1}}=(a-a_{1})\left(\frac{1}{p}-\frac{s}{Q}-\frac{1}{\alpha}\right)+\frac{a}{Q}(\beta-\sigma)>0,
\end{equation}
and
\begin{equation}\label{2.18}
\frac{1}{\tau}-\frac{1}{\tau_{2}}=(a-a_{2})\left(\frac{1}{p}-\frac{1}{\alpha}\right)+\frac{a}{Q}(\beta-\sigma-s)>0.
\end{equation}
From \eqref{tau1}, \eqref{2.19} and \eqref{2.18}, we have
$$\tau_{1}>\tau,\,\,\,\tau_{2}>\tau.$$
Thus, using this, \eqref{2.14} and H\"{o}lder's inequality, we obtain
\begin{equation}
\|q^{\gamma}(x)u\|_{L^{\tau}(\mathbb{G}\setminus B_{1})}\leq C \|q^{\gamma_{1}}(x)u\|_{L^{\tau_{1}}(\mathbb{G})},
\end{equation}
and
\begin{equation}
\|q^{\gamma}(x)u\|_{L^{\tau}(B_{1})}\leq C \|q^{\gamma_{2}}(x)u\|_{L^{\tau_{2}}(\mathbb{G})},
\end{equation}
where $B_{1}$ is the unit quasi-ball.
By using the previous case, we establish
\begin{equation}
 \|q^{\gamma_{1}}(x)u\|_{L^{\tau_{1}}(\mathbb{G})}\leq C[u]^{a_{1}}_{s,p,\beta,q}\|q^{\mu}(x)u\|^{1-a_{1}}_{L^{\alpha}(\mathbb{G})}\leq C,
\end{equation}
and
\begin{equation}
 \|q^{\gamma_{2}}(x)u\|_{L^{\tau_{2}}(\mathbb{G})}\leq C[u]^{a_{2}}_{s,p,\beta,q}\|q^{\mu}(x)u\|^{1-a_{2}}_{L^{\alpha}(\mathbb{G})}\leq C.
\end{equation}
The proof of Theorem \ref{CKN1} is complete.
\end{proof}

\begin{rem}
By taking in \eqref{2in} $a=1$, $\tau = p$, $\beta_{1}=\beta_{2}=0$, and $\gamma = -s$, we get an analogue of the fractional Hardy inequality on homogeneous Lie groups (Theorem 2.9, \cite{KS}).
\end{rem}
\begin{rem}
In the Abelian case $(\mathbb{R}^{N},+)$ with the standard Eucledian distance instead of the quasi-norm and by taking in \eqref{2in} $a=1$, $\tau = p$, $\beta_{1}=\beta_{2}=0$, and $\gamma = -s$, we get the fractional Hardy inequality (Theorem 1.1, \cite{FS}).
\end{rem}
Now we consider the critical case $\frac{1}{\tau}+\frac{\gamma}{Q}=0.$
\begin{thm}\label{CKN2}
Assume that $Q\geq2$, $s\in(0,1)$, $p>1$, $\alpha\geq1$, $\tau>1$, $a\in(0,1]$, $\beta_1,\,\beta_2,\,\beta,\,\mu,\,\,\gamma\in\mathbb{R}$, $\beta_1+\beta_2=\beta$,

\begin{equation}\label{4.45}
\frac{1}{\tau}+\frac{\gamma}{Q}=a\left(\frac{1}{p}+\frac{\beta-s}{Q}\right)+(1-a)\left(\frac{1}{\alpha}+\frac{\mu}{Q}\right).
\end{equation}
Assume in addition that, $0\leq\beta-\sigma\leq s$ with
 $\gamma=a\sigma+(1-a)\mu.$

If $\frac{1}{\tau}+\frac{\gamma}{Q}=0$ and $\text{supp}\,u\subset B_{R},$ then, we have
\begin{equation}\label{3in}
\left\|\frac{q^{\gamma}(x)}{\ln\frac{2R}{q(x)}}u\right\|_{L^{\tau}(\mathbb{G})}\leq C [u]^{a}_{s,p,\beta,q}\|q^{\mu}(x)u\|^{1-a}_{L^{\alpha}(\mathbb{G})},\,\,u\in C_{c}^{1}(\mathbb{G}),
\end{equation}
where $B_R=\{x\in\mathbb{G}:q(x)<R\}$ is the quasi-ball and $0<r<R$.
\end{thm}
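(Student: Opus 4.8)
The plan is to run the annular machinery from the proof of Theorem \ref{CKN1}, but to replace the geometric decay in $k$ (which is now absent, since the critical relation \eqref{4.45} forces $\gamma\tau+Q=0$) by a discrete Hardy inequality; the logarithmic weight is precisely what renders the resulting sum convergent. First I would fix $N\in\mathbb{Z}$ with $2^{N}\le R<2^{N+1}$, so that $\mathrm{supp}\,u\subset B_{R}\subset\bigcup_{k\le N}A_{k,q}$ and, in particular, $u_{A_{k,q}}=0$ for $k\ge N+1$. The annular estimate \eqref{2.4} holds verbatim, and because $\gamma\tau+Q=0$ its leading term loses its power of $2$:
\begin{equation*}
\int_{A_{k,q}}q^{\gamma\tau}(x)|u|^{\tau}\,dx\le C|u_{A_{k,q}}|^{\tau}+C[u]^{a\tau}_{s,p,\beta,q,A_{k,q}}\|q^{\mu}(x)u\|^{(1-a)\tau}_{L^{\alpha}(A_{k,q})}.
\end{equation*}
To insert the logarithm I would compare, on each $A_{k,q}$ with $k\le N$ intersected with $B_{R}$, the factor $\ln\frac{2R}{q(x)}$ with $N+1-k$: since $q(x)<2^{k+1}$ and $2^{N}\le R$, a short check gives $\ln\frac{2R}{q(x)}\ge c\,(N+1-k)$ for some $c>0$, where the choice of $2R$ and the shift $N+1$ guarantee positivity even on the outermost annulus $A_{N,q}$. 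Hence
\begin{equation*}
\int_{A_{k,q}}\frac{q^{\gamma\tau}(x)}{\left(\ln\frac{2R}{q(x)}\right)^{\tau}}|u|^{\tau}\,dx\le \frac{C}{(N+1-k)^{\tau}}\left(|u_{A_{k,q}}|^{\tau}+[u]^{a\tau}_{s,p,\beta,q,A_{k,q}}\|q^{\mu}(x)u\|^{(1-a)\tau}_{L^{\alpha}(A_{k,q})}\right).
\end{equation*}

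Next I would sum over $k\le N$. The seminorm term is harmless: $(N+1-k)^{-\tau}\le1$, so after summation the inequality \eqref{kb} (with $s=\frac{a\tau}{p}$, $t=\frac{(1-a)\tau}{\alpha}$, whose sum is $\ge1$ exactly because $\beta-\sigma\le s$) collapses it into $C[u]^{a\tau}_{s,p,\beta,q}\|q^{\mu}(x)u\|^{(1-a)\tau}_{L^{\alpha}(\mathbb{G})}$, as in Theorem \ref{CKN1}. The real work lies in the averaged term $\sum_{k\le N}(N+1-k)^{-\tau}|u_{A_{k,q}}|^{\tau}$. Since $u_{A_{N+1,q}}=0$, I would telescope downward,
\begin{equation*}
|u_{A_{k,q}}|\le\sum_{j=k}^{N}|u_{A_{j+1,q}}-u_{A_{j,q}}|,
\end{equation*}
and estimate each difference by Lemma \ref{GN1} (taken with $\tau=1$), exactly as in the subcritical proof. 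Incorporating the weights $q^{\beta}$ and $q^{\mu}$ on $A_{j,q}$ produces a power $2^{\kappa j}$ with $\kappa=-Q\left(\frac{1}{\tau}+\frac{\gamma}{Q}\right)$, which vanishes in the critical case; thus each difference is bounded by $C\,c_{j}$, uniformly in $j$, where $c_{j}:=[u]^{a}_{s,p,\beta,q,A_{j+1,q}\cup A_{j,q}}\|q^{\mu}(x)u\|^{1-a}_{L^{\alpha}(A_{j+1,q}\cup A_{j,q})}$.

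Finally, writing $i=N+1-k$ and $d_{l}=c_{N+1-l}$, the averaged term takes the form $\sum_{i\ge1}i^{-\tau}\left(\sum_{l=1}^{i}d_{l}\right)^{\tau}=\sum_{i\ge1}\left(\frac{1}{i}\sum_{l=1}^{i}d_{l}\right)^{\tau}$, to which I would apply the classical discrete Hardy inequality; this is the single point where the hypothesis $\tau>1$ (not needed in Theorem \ref{CKN1}) is indispensable, and it yields the bound $C\sum_{j}c_{j}^{\tau}$. A last application of \eqref{kb} to $c_{j}^{\tau}=[u]^{a\tau}_{s,p,\beta,q,\cdots}\|q^{\mu}(x)u\|^{(1-a)\tau}_{L^{\alpha}(\cdots)}$ converts this into $C[u]^{a\tau}_{s,p,\beta,q}\|q^{\mu}(x)u\|^{(1-a)\tau}_{L^{\alpha}(\mathbb{G})}$; combining the two terms and taking $\tau$-th roots gives \eqref{3in}. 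The two steps I expect to be delicate are verifying the lower bound $\ln\frac{2R}{q(x)}\gtrsim N+1-k$ uniformly down to the outermost annulus (this is exactly what dictates the weight $\ln\frac{2R}{q(x)}$ rather than $\ln\frac{R}{q(x)}$), and arranging the telescoped averages into the precise summation form on which the discrete Hardy inequality can be invoked.
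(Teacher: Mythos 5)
Your proposal is correct, and it follows the paper's overall skeleton --- the annular estimate \eqref{2.4} (unchanged, since it only used \eqref{1.2}, not the sign of $\frac{1}{\tau}+\frac{\gamma}{Q}$), the comparison $\ln\frac{2R}{q(x)}\gtrsim n+1-k$ on each annulus intersected with the support, and the final application of \eqref{kb} with $s=\frac{a\tau}{p}$, $t=\frac{(1-a)\tau}{\alpha}$, $s+t\geq 1$ --- but it handles the one genuinely delicate step by a different mechanism. To bound the weighted sum $\sum_{k}(n+1-k)^{-\tau}|u_{A_{k,q}}|^{\tau}$, the paper runs an in-line iteration: it applies Lemma \ref{elin} with the $k$-dependent choice $\zeta=\frac{(n+1-k)^{\varepsilon}}{(n+\frac{1}{2}-k)^{\varepsilon}}$ to get \eqref{jojoj}, sums using the asymptotic \eqref{3.2.2}, and sets $\varepsilon=\tau-1$ to arrive at \eqref{3.4}; the requirement $\varepsilon>0$ is where $\tau>1$ enters. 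You instead exploit $u_{A_{N+1,q}}=0$ (valid, since $\operatorname{supp} u\subset B_{R}\subset B_{2^{N+1}}$) to telescope $|u_{A_{k,q}}|\leq\sum_{j=k}^{N}|u_{A_{j+1,q}}-u_{A_{j,q}}|$, bound each difference uniformly by $c_{j}$ via Lemma \ref{GN1} with $\tau=1$ --- your computation that the weight-conversion exponent $\kappa=-Q\bigl(\frac{1}{\tau}+\frac{\gamma}{Q}\bigr)$ vanishes in the critical case is exactly right and is the same cancellation the paper uses --- and then invoke the classical discrete Hardy inequality
\begin{equation*}
\sum_{i\geq1}\Bigl(\frac{1}{i}\sum_{l=1}^{i}d_{l}\Bigr)^{\tau}\leq\Bigl(\frac{\tau}{\tau-1}\Bigr)^{\tau}\sum_{l\geq1}d_{l}^{\tau},
\end{equation*}
which again needs precisely $\tau>1$. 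The two routes are morally equivalent (the paper's \eqref{jojoj}--\eqref{3.4} is in effect a hand-rolled discrete Hardy argument), but yours is cleaner in two respects: it makes the role of the hypothesis $\tau>1$ conspicuous as the Hardy exponent rather than hiding it in the choice $\varepsilon=\tau-1$, and it replaces the somewhat fragile bookkeeping of \eqref{jojoj} and \eqref{3.2.2} by a named classical inequality; what the paper's version buys in exchange is self-containedness, reusing only its own Lemma \ref{elin}, which already served in the proof of Theorem \ref{CKN1}. Your explicit verification of $\ln\frac{2R}{q(x)}\geq c\,(N+1-k)$ down to the outermost annulus (where the factor $2$ in $2R$ is what saves positivity) is also more careful than the paper, which uses this comparison implicitly in \eqref{3.1}.
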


\begin{proof}[Proof of Theorem \ref{CKN2}]
The proof is similar to the proof of Theorem \ref{CKN1}. In \eqref{2.4}, summing over $k$ from $m$ to $n$ and fixing $\varepsilon>0$, we have \begin{multline}\label{3.1}
\int_{\{q(x)>2^{m}\}}\frac{q^{\gamma\tau}(x)}{\ln^{1+\varepsilon}\left(\frac{2R}{q(x)}\right)}|u|^{\tau}dx\leq C \sum_{k=m}^{n}\frac{1}{(n+1-k)^{1+\varepsilon}}|u_{A_{k,q}}|^{\tau}\\
+C \sum_{k=m}^{n} [u]^{a\tau}_{s,p,\beta,q,A_{k,q}}\|q^{\mu}(x)u\|^{(1-a)\tau}_{L^{\alpha}(A_{k,q})}.
\end{multline}
From Lemma \ref{GN1}, we have
\begin{equation*}
|u_{A_{k+1,q}}-u_{A_{k,q}}|\leq C2^{\frac{a k(sp-Q)}{p}}[u]_{s,p,q,A_{k+1,q}\cup A_{k,q}}^{a}\left(\mint{-}_{A_{k+1,q}\cup A_{k,q}}|u|^{\alpha}dx\right)^{\frac{1-a}{\alpha}}.
\end{equation*}
By using Lemma \ref{elin} with $\zeta=\frac{(n+1-k)^{\varepsilon}}{(n+\frac{1}{2}-k)^{\varepsilon}}$ we get
\begin{multline}\label{jojoj}
\frac{|u_{A_{k,q}}|^{\tau}}{(n+1-k)^{\varepsilon}}\leq \frac{|u_{A_{k+1,q}}|^{\tau}}{(n+\frac{1}{2}-k)^{\varepsilon}}\\
+C (n+1-k)^{\tau-1-\varepsilon}[u]^{a\tau}_{s,p,\beta,q,A_{k+1,q}\cup A_{k,q}}\|q^{\mu}(x)u\|^{(1-a)\tau}_{L^{\alpha}(A_{k+1,q}\cup A_{k,q})}.
\end{multline}
For $\varepsilon>0$ and $n\geq k,$ we have
\begin{equation}\label{3.2.2}
\frac{1}{(n-k+1)^{\varepsilon}}-\frac{1}{(n-k+\frac{3}{2})^{\varepsilon}}\sim \frac{1}{(n-k+1)^{1+\varepsilon}}.
\end{equation}
By using this fact, \eqref{jojoj}, \eqref{3.2.2} and $\varepsilon=\tau-1$, we obtain
\begin{equation}\label{3.4}
\sum^{n}_{k=m}\frac{|u_{A_{k,q}}|^{\tau}}{(n+1-k)^{\tau}}\leq C \sum^{n}_{k=m}[u]^{a\tau}_{s,p,\beta,q,A_{k+1,q}\cup A_{k,q}}\|q^{\mu}(x)u\|^{(1-a)\tau}_{L^{\alpha}(A_{k+1,q}\cup A_{k,q})}.
\end{equation}
From \eqref{3.1} and \eqref{3.4}, we establish
\begin{equation}
\int_{\{q(x)>2^{m}\}}\frac{q^{\gamma\tau}(x)}{\ln^{\tau}\frac{2R}{q(x)}}|u|^{\tau}dx\leq C \sum^{n}_{k=m}[u]^{a\tau}_{s,p,\beta,q,A_{k+1,q}\cup A_{k,q}}\|q^{\mu}(x)u\|^{(1-a)\tau}_{L^{\alpha}(A_{k+1,q}\cup A_{k,q})}.
\end{equation}
By using \eqref{kb} with \eqref{4.45} and $0\leq\beta-\sigma\leq s$, where $s=\frac{\tau a}{p}$, $t=\frac{(1-a)\tau}{\alpha}$, we have $s+t\geq1$ and we arrive at
\begin{equation}
\int_{\{q(x)>2^{m}\}}\frac{q^{\gamma\tau}(x)}{\ln^{\tau}\frac{2R}{q(x)}}|u|^{\tau}dx\leq C \sum^{n}_{k=m}[u]^{a\tau}_{s,p,\beta,q,\cup_{k=m}^{\infty} A_{k,q}}\|q^{\mu}(x)u\|^{(1-a)\tau}_{L^{\alpha}(\cup_{k=m}^{\infty} A_{k,q})}.
\end{equation}
Theorem \ref{CKN2} is proved.
\end{proof}

\section{Lyapunov-type inequalities for the fractional operators on $\mathbb{G}$}
\label{SEC:5}

 In this section we prove the Lyapunov-type  inequality for the Riesz potential and for the fractional $p$-sub-Laplacian system on homogeneous Lie groups. Note that the Lyapunov-type inequality for the Riesz operator is  new even in the Abelian case $(\mathbb{R}^{N},+)$. Also, we give applications of the Lyapunov-type inequality, more precisely,  we give two side estimates for the first eigenvalue of the Riesz potential of the fractional $p$-sub-Laplacian system.

Let us consider the Riesz potential on a Haar measurable set $\Omega\subset\mathbb{G}$ that can be defined by the formula
  \begin{equation}\label{rieszosn}
  \mathfrak{R}u(x)=\int_{\Omega}\frac{u(y)}{q^{Q-2s}(y^{-1}\circ x)}dy,\,\,\, 0<2s<Q.
  \end{equation}
The (weighted) Riesz potential can be also defined by
\begin{equation}\label{rieszosnw}
\mathfrak{R}(\omega u)(x)=\int_{\Omega}\frac{\omega(y) u(y)}{q^{Q-2s}(y^{-1}\circ x)}dy,\,\,\, 0<2s<Q.
\end{equation}
 \begin{thm}\label{Lyapunov}
 Let $\Omega\subset\mathbb{G}$ be a Haar measurable set and let $Q\geq2>2s>0$ and let $1< p< 2$.
Assume that  $\omega\in L^{\frac{p}{2-p}}(\Omega)$, $\frac{1}{q^{Q-2s}(y^{-1}\circ x)}\in L^{\frac{p}{p-1}}(\Omega\times\Omega)$ and $C_{0}=\left\|\frac{1}{q^{Q-2s}(y^{-1}\circ x)}\right\|_{L^{\frac{p}{p-1}}(\Omega\times\Omega)}$. Let $u\in L^{\frac{p}{p-1}}(\Omega)$, $u\neq0$, satisfy
\begin{equation}\label{r1}
\mathfrak{R}(\omega u)(x)=\int_{\Omega}\frac{\omega(y) u(y)}{q^{Q-2s}(y^{-1}\circ x)}dy=u(x),\,\,\text{for\,\,a.e.}\,\,x\in\Omega.
\end{equation}
Then
 \begin{equation}
\|\omega\|_{L^{\frac{p}{2-p}}(\Omega)}\geq \frac{1}{C_{0}}.
 \end{equation}
 \end{thm}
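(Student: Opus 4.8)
The plan is to read the eigenfunction equation \eqref{r1} as a pointwise identity for $u$ and to estimate its right-hand side by a three-factor Hölder inequality whose exponents are tailored precisely to the three norms appearing in the statement. Writing $p'=\frac{p}{p-1}$ for the conjugate exponent (so that $u\in L^{p'}(\Omega)$ by hypothesis) and abbreviating the kernel by $K(x,y)=\frac{1}{q^{Q-2s}(y^{-1}\circ x)}$, the first thing I would record is the arithmetic identity
\[
\frac{1}{p'}+\frac{2-p}{p}+\frac{1}{p'}=1,
\]
which holds for every $1<p<2$ because $\frac{2}{p'}+\frac{2-p}{p}=\frac{2(p-1)+(2-p)}{p}=1$. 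This is the structural heart of the argument: it says that the exponents $p'$, $\frac{p}{2-p}$, $p'$ are Hölder-conjugate, and these are exactly the exponents that measure $u$, $\omega$, and $K$, respectively.

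The second step is to fix $x\in\Omega$ and apply Hölder's inequality with the three exponents above to the three factors $K(x,\cdot)$, $\omega(\cdot)$, $u(\cdot)$ in the inner integral of \eqref{r1}. Since $|u(x)|\le\int_\Omega K(x,y)\,|\omega(y)|\,|u(y)|\,dy$, this yields the pointwise bound
\[
|u(x)|\le\left(\int_\Omega K(x,y)^{p'}\,dy\right)^{1/p'}\,\|\omega\|_{L^{\frac{p}{2-p}}(\Omega)}\,\|u\|_{L^{p'}(\Omega)}.
\]
Raising to the power $p'$ and integrating in $x$ over $\Omega$, the factors $\|\omega\|$ and $\|u\|$ come out as constants, while Tonelli's theorem collapses the remaining double integral into $\int_\Omega\int_\Omega K(x,y)^{p'}\,dy\,dx=\|K\|_{L^{p'}(\Omega\times\Omega)}^{p'}=C_{0}^{p'}$ by the very definition of $C_0$. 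Hence
\[
\|u\|_{L^{p'}(\Omega)}^{p'}\le C_{0}^{p'}\,\|\omega\|_{L^{\frac{p}{2-p}}(\Omega)}^{p'}\,\|u\|_{L^{p'}(\Omega)}^{p'}.
\]

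Finally, since $u\neq 0$ and $u\in L^{p'}(\Omega)$, the quantity $\|u\|_{L^{p'}(\Omega)}^{p'}$ is strictly positive and finite, so it may be cancelled from both sides; taking $p'$-th roots gives $1\le C_{0}\,\|\omega\|_{L^{\frac{p}{2-p}}(\Omega)}$, which is precisely the asserted inequality $\|\omega\|_{L^{\frac{p}{2-p}}(\Omega)}\ge C_{0}^{-1}$. I do not expect a genuine obstacle here: the hypotheses $\omega\in L^{\frac{p}{2-p}}(\Omega)$, $K\in L^{p'}(\Omega\times\Omega)$, and $u\in L^{p'}(\Omega)$ are exactly what make every integral above finite and thereby legitimize both the Hölder step and the final cancellation. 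The only point requiring real care is the exponent bookkeeping — verifying that the triple $(p',\frac{p}{2-p},p')$ has reciprocals summing to $1$ — together with confirming that the Tonelli interchange is justified by the nonnegativity of the integrand.
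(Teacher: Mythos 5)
Your proposal is correct and follows essentially the same route as the paper: the paper obtains exactly your pointwise bound by applying the two-factor H\"older inequality twice (first with exponents $p,p'$, then with $\theta=\frac{1}{2-p}$, $\theta'$), which is just your single three-factor H\"older step with exponents $\bigl(p',\tfrac{p}{2-p},p'\bigr)$ written iteratively, and then cancels $\|u\|_{L^{p/(p-1)}(\Omega)}$ in the same way. No meaningful difference in substance or generality.
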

  \begin{proof}[Proof of Theorem \ref{Lyapunov}]
In \eqref{r1}, by using H\"{o}lder's inequality for $p,\theta> 1$ with $ \frac{1}{p}+\frac{1}{p'}=1\,\,\text{and}\,\,\frac{1}{\theta}+\frac{1}{\theta'}=1$, we have
\begin{multline}
|u(x)|=\left|\int_{\Omega}\frac{\omega(y) u(y)}{q^{Q-2s}(y^{-1}\circ x)}dy\right|\leq\left(\int_{\Omega}|\omega(y) u(y)|^{p}dy\right)^{\frac{1}{p}}\left(\int_{\Omega}\left|\frac{1}{q^{Q-2s}(y^{-1}\circ x)}\right|^{p'}dy\right)^{\frac{1}{p'}}\\
\leq\left(\int_{\Omega}|\omega(y)|^{p\theta}dy\right)^{\frac{1}{p\theta}}\left(\int_{\Omega}|u(y)|^{\theta'p}dy\right)^{\frac{1}{\theta'p}}\left(\int_{\Omega}\left|\frac{1}{q^{Q-2s}(y^{-1}\circ x)}\right|^{p'}dy\right)^{\frac{1}{p'}}\\
=\|\omega\|_{L^{p\theta}(\Omega)}\|u\|_{L^{p\theta'}(\Omega)}\left(\int_{\Omega}\left|\frac{1}{q^{Q-2s}(y^{-1}\circ x)}\right|^{p'}dy\right)^{\frac{1}{p'}}.
\end{multline}
Let $p'$  be such that $p'=p\theta'$ and then $\theta=\frac{1}{2-p}$. Thus, we get
\begin{equation}\label{rlyapposl}
|u(x)|\leq\|\omega\|_{L^{\frac{p}{2-p}}(\Omega)}\|u\|_{L^{\frac{p}{p-1}}(\Omega)}\left(\int_{\Omega}\left|\frac{1}{q^{Q-2s}(y^{-1}\circ x)}\right|^{\frac{p}{p-1}}dy\right)^{\frac{p-1}{p}}.
\end{equation}
From \eqref{rlyapposl} we calculate
\begin{equation*}
\|u\|_{L^{\frac{p}{p-1}}(\Omega)}\leq \|\omega\|_{L^{\frac{p}{2-p}}(\Omega)}\|u\|_{L^{\frac{p}{p-1}}(\Omega)}\left(\int_{\Omega}\int_{\Omega}\left|\frac{1}{q^{Q-2s}(y^{-1}\circ x)}\right|^{\frac{p}{p-1}}dxdy\right)^{\frac{p-1}{p}}
\end{equation*}
\begin{equation}
= C_{0} \|\omega\|_{L^{\frac{p}{2-p}}(\Omega)}\|u\|_{L^{\frac{p}{p-1}}(\Omega)}.
\end{equation}
Finally, since $u\neq0$, this implies
\begin{equation}
\|\omega\|_{L^{\frac{p}{2-p}}(\Omega)}\geq \frac{1}{C_{0}}.
\end{equation}
Theorem \ref{Lyapunov} is proved.
  \end{proof}

   Let us consider the following spectral problem for the Riesz potential:
   \begin{equation}\label{rieszspec}
   \mathfrak{R}u(x)=\int_{\Omega}\frac{u(y)}{q^{Q-2s}(y^{-1}\circ x)}dy=
   \lambda u(x),\,\,\,\,\,\,x\in\Omega,\,\,\, 0<2s<Q.
   \end{equation}
  We recall the Rayleigh quotient for the Riesz potential:

  \begin{equation}\label{Rayleigh}
  \lambda_{1}(\Omega)=\sup_{u\neq0}\frac{\int_{\Omega}\int_{\Omega}\frac{u(x)u(y)}{q^{Q-2s}(y^{-1}\circ x)}dxdy}{\|u\|^{2}_{L^{2}(\Omega)}},
  \end{equation}
    where $\lambda_{1}(\Omega)$ is the first eigenvalue of the Riesz potential.

So, a direct consequence of Theorem \ref{Lyapunov} is
   \begin{thm}\label{Lyapunapp}
 Let $\Omega\subset\mathbb{G}$ be a Haar measurable set and $Q\geq2>2s>0$ and let $1< p<2$.
Assume that  $\frac{1}{q^{Q-2s}(y^{-1}\circ x)}\in L^{\frac{p}{p-1}}(\Omega\times\Omega)$.
 Then for the spectral problem \eqref{rieszspec}, we have
 \begin{equation}
\lambda_{1}(\Omega)\leq C_{0}|\Omega|^{\frac{2-p}{p}},
 \end{equation}
 where $C_{0}=\left\|\frac{1}{q^{Q-2s}(y^{-1}\circ x)}\right\|_{L^{\frac{p}{p-1}}(\Omega\times\Omega)}$.
  \end{thm}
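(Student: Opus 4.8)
The plan is to read the spectral problem \eqref{rieszspec} as the special case of the weighted Riesz equation \eqref{r1} in which the weight $\omega$ is \emph{constant}, and then invoke Theorem \ref{Lyapunov} directly. Concretely, let $u\neq0$ be an eigenfunction of \eqref{rieszspec} associated with the first eigenvalue $\lambda_1=\lambda_1(\Omega)$, so that $\mathfrak{R}u=\lambda_1 u$ on $\Omega$. Since the kernel $q^{-(Q-2s)}(y^{-1}\circ x)$ is strictly positive, testing the Rayleigh quotient \eqref{Rayleigh} with any nonnegative $u$ shows $\lambda_1>0$, so dividing by $\lambda_1$ is legitimate.

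First I would set $\omega\equiv \frac{1}{\lambda_1}$, a constant weight on $\Omega$. Then
\begin{equation*}
\mathfrak{R}(\omega u)(x)=\int_{\Omega}\frac{\omega(y)u(y)}{q^{Q-2s}(y^{-1}\circ x)}dy=\frac{1}{\lambda_1}\int_{\Omega}\frac{u(y)}{q^{Q-2s}(y^{-1}\circ x)}dy=\frac{1}{\lambda_1}\,\lambda_1 u(x)=u(x),
\end{equation*}
so $u$ solves \eqref{r1} with this choice of $\omega$. Before applying Theorem \ref{Lyapunov} I would verify its hypotheses: the kernel lies in $L^{\frac{p}{p-1}}(\Omega\times\Omega)$ by assumption, which (since $Q>2s$) confines $\Omega$ to finite measure and hence places the constant $\omega$ in $L^{\frac{p}{2-p}}(\Omega)$; and the eigenfunction may be taken in $L^{\frac{p}{p-1}}(\Omega)$ by the standard mapping properties of the Riesz potential $\mathfrak{R}$. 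With these in place, Theorem \ref{Lyapunov} yields $\|\omega\|_{L^{\frac{p}{2-p}}(\Omega)}\geq 1/C_0$.

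It then remains to evaluate the norm of the constant weight: since $\frac{1}{r}=\frac{2-p}{p}$ for $r=\frac{p}{2-p}$, we have
\begin{equation*}
\|\omega\|_{L^{\frac{p}{2-p}}(\Omega)}=\frac{1}{\lambda_1}\,|\Omega|^{\frac{2-p}{p}}.
\end{equation*}
Combining this with the lower bound $\|\omega\|_{L^{\frac{p}{2-p}}(\Omega)}\geq 1/C_0$ gives $\frac{1}{\lambda_1}|\Omega|^{\frac{2-p}{p}}\geq \frac{1}{C_0}$, and rearranging produces the desired inequality $\lambda_1(\Omega)\leq C_0|\Omega|^{\frac{2-p}{p}}$.

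Since the argument is essentially a one-line reduction to Theorem \ref{Lyapunov}, there is no serious analytic obstacle; the only points requiring care are confirming that $\lambda_1>0$ (so that the reciprocal weight is well defined) and that the eigenfunction has the integrability $u\in L^{\frac{p}{p-1}}(\Omega)$ demanded by Theorem \ref{Lyapunov}. The latter is the step I would expect to need the most attention, as it relies on the regularizing properties of $\mathfrak{R}$ together with the finiteness of $|\Omega|$.
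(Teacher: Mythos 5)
Your proposal is exactly the paper's own argument: the paper's entire proof of Theorem \ref{Lyapunapp} consists of taking the constant weight $\omega=\frac{1}{\lambda_{1}(\Omega)}$ in Theorem \ref{Lyapunov}, using the Rayleigh quotient \eqref{Rayleigh}, and evaluating $\|\omega\|_{L^{\frac{p}{2-p}}(\Omega)}=\lambda_{1}^{-1}|\Omega|^{\frac{2-p}{p}}$, just as you do. Your additional hypothesis checks ($\lambda_{1}>0$ and $u\in L^{\frac{p}{p-1}}(\Omega)$) are care the paper leaves implicit; the only soft spot is your parenthetical claim that kernel integrability forces $|\Omega|<\infty$, which is not obvious in general, but this is harmless since the stated bound is vacuous when $|\Omega|=\infty$.
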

 \begin{proof}[Proof of Theorem \ref{Lyapunapp}]
By using \eqref{Rayleigh}, Theorem \ref{Lyapunov} and $\omega=\frac{1}{\lambda_{1}(\Omega)}$, we obtain
 \begin{equation}
 \lambda_{1}(\Omega)\leq C_{0} |\Omega|^\frac{2-p}{p}.
 \end{equation}
 Theorem \ref{Lyapunapp} is proved.
 \end{proof}

 In  the Abelian group $(\mathbb{R}^{N},+)$ we have the following consequences. To the best of our knowledge, these results seem new (even in this Euclidean case).

Let us consider the Riesz potential on $\Omega\subset\mathbb{R}^{N}$:
\begin{equation}\label{rieszspecRN}
  \mathfrak{R}u(x)=\int_{\Omega}\frac{u(y)}{|x-y|^{N-2s}}dy,\,\,\, 0<2s<N,
  \end{equation}
  and the weighted Riesz potential
 \begin{equation}\label{rieszspecRNwei}
  \mathfrak{R}(\omega u)(x)=\int_{\Omega}\frac{\omega(y)u(y)}{|x-y|^{N-2s}}dy,\,\,\, 0<2s<N.
  \end{equation}
  Then we have following theorem:
\begin{thm}\label{LyapunovRN}
 Let $\Omega\subset\mathbb{R}^{N},\,\,\,N\geq2,$ be a measurable set with $|\Omega|<\infty$, $1<p< 2$ and let  $N\geq2>2s>0$.
Assume that  $\omega\in L^{\frac{p}{2-p}}(\Omega)$, $\frac{1}{|x-y|^{N-2s}}\in L^{\frac{p}{p-1}}(\Omega\times\Omega)$ and let $S=\left\|\frac{1}{|x-y|^{N-2s}}\right\|_{L^{\frac{p}{p-1}}(\Omega\times\Omega)}$.
Assume that $u\in L^{\frac{p}{p-1}}(\Omega)$, $u\neq0$, satisfies
$$\mathfrak{R}(\omega u)(x)=u(x),\,\,\,x\in \Omega.$$
Then
 \begin{equation}
\|\omega\|_{L^{\frac{p}{2-p}}(\Omega)}\geq \frac{1}{S}.
 \end{equation}

 \end{thm}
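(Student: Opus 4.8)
The plan is to observe that Theorem \ref{LyapunovRN} is nothing but the specialisation of Theorem \ref{Lyapunov} to the Abelian homogeneous group $(\mathbb{R}^{N},+)$. Indeed, $\mathbb{R}^{N}$ equipped with the isotropic dilations $D_{\lambda}(x)=\lambda x$ (so that $\nu_{1}=\cdots=\nu_{N}=1$) is a homogeneous Lie group of homogeneous dimension $Q=N$, and the Euclidean norm $|x|$ is a homogeneous quasi-norm in the sense of the three defining properties i)--iii). Since the group law is addition, one has $y^{-1}\circ x=x-y$ and hence $q(y^{-1}\circ x)=|x-y|$. Under these identifications the weighted Riesz potential \eqref{rieszspecRNwei} is exactly \eqref{rieszosnw}, the constant $S$ is precisely $C_{0}$, and every hypothesis of Theorem \ref{LyapunovRN} translates verbatim into a hypothesis of Theorem \ref{Lyapunov}; the desired conclusion $\|\omega\|_{L^{p/(2-p)}(\Omega)}\geq 1/S$ then follows immediately.

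For completeness I would also give the argument directly, repeating the proof of Theorem \ref{Lyapunov} in the Euclidean setting. Starting from the integral equation $\mathfrak{R}(\omega u)(x)=u(x)$, I first apply H\"{o}lder's inequality with the conjugate exponents $p$ and $p'=p/(p-1)$ to peel off the kernel:
\[
|u(x)|\leq\Bigl(\int_{\Omega}|\omega(y)u(y)|^{p}\,dy\Bigr)^{1/p}\Bigl(\int_{\Omega}|x-y|^{-(N-2s)p'}\,dy\Bigr)^{1/p'}.
\]
A second application of H\"{o}lder, with exponents $\theta=\frac{1}{2-p}$ and $\theta'$, splits the first factor as $\|\omega\|_{L^{p\theta}(\Omega)}\|u\|_{L^{p\theta'}(\Omega)}$; the choice $\theta=\frac{1}{2-p}$ forces $p\theta=\frac{p}{2-p}$ and $p\theta'=\frac{p}{p-1}$, matching exactly the norms appearing in the statement.

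It then remains to raise the resulting pointwise bound on $|u(x)|$ to the power $p/(p-1)$, integrate in $x$ over $\Omega$, and use Tonelli's theorem to recognise the emerging double integral of the kernel as $S^{p/(p-1)}$. Since $u\in L^{p/(p-1)}(\Omega)$ and $u\neq 0$, the common factor $\|u\|_{L^{p/(p-1)}(\Omega)}$ may be cancelled from both sides, yielding $1\leq S\,\|\omega\|_{L^{p/(2-p)}(\Omega)}$, which is the claim. The only points requiring care---and these are identical to those in Theorem \ref{Lyapunov}---are the bookkeeping of exponents (the hypothesis $1<p<2$ guarantees $\frac{p}{2-p}>0$ and $\frac{p}{p-1}>2$, so all the H\"{o}lder pairings are admissible) and the verification, via the integrability assumptions on $\omega$, $u$ and the kernel, that every integral is finite so that Fubini--Tonelli applies. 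Neither presents a genuine difficulty, so the essential content is precisely the exponent matching already carried out in the general case.
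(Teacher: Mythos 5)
Your proposal is correct and takes exactly the paper's route: the paper proves Theorem \ref{LyapunovRN} in one line by setting $\mathbb{G}=(\mathbb{R}^{N},+)$ in Theorem \ref{Lyapunov} and replacing the quasi-norm by the Euclidean distance, which is precisely your main argument. Your supplementary ``direct'' Euclidean proof is just the proof of Theorem \ref{Lyapunov} transcribed, with the same exponent bookkeeping ($\theta=\frac{1}{2-p}$, so $p\theta=\frac{p}{2-p}$ and $p\theta'=\frac{p}{p-1}$), so it is a valid but redundant expansion of the same approach rather than a different one.
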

  \begin{proof}[Proof of Theorem \ref{LyapunovRN}]
In Theorem \ref{Lyapunov} we set  $\mathbb{G}=(\mathbb{R}^{N},+)$ and take the standard Euclidean distance instead of the quasi-norm.
 \end{proof}
Let us consider the spectral problem for \eqref{rieszspecRN}:
\begin{equation}\label{rieszspecRNlambda}
  \mathfrak{R}u(x)=\int_{\Omega}\frac{u(y)}{|x-y|^{N-2s}}dy=\lambda u(x),\,\,\, 0<2s<N,
  \end{equation}

 \begin{thm}\label{LyapunappRN}
 Let $\Omega\subset\mathbb{R}^{N},\,N\geq2,$ be a set with $|\Omega|<\infty$, $1<p<2$ and $N\geq2>2s>0$ and $1<p< 2$. Assume that  $\omega\in L^{\frac{p}{2-p}}(\Omega)$, $\frac{1}{|x-y|^{N-2s}}\in L^{\frac{p}{p-1}}(\Omega\times\Omega)$ and $S=\left\|\frac{1}{|x-y|^{N-2s}}\right\|_{L^{\frac{p}{p-1}}(\Omega\times\Omega)}$. Then for the spectral problem \eqref{rieszspecRNlambda}
   we have,
 \begin{equation}
 \lambda_{1}(\Omega)\leq\lambda_{1}(B)\leq S|B|^{\frac{2-p}{p}},
 \end{equation}
 where $B\subset\mathbb{R}^{N}$ is an open ball, $\lambda_{1}(\Omega)$ is the first eigenvalue of the spectral problem \eqref{rieszspecRNlambda} with $|\Omega|=|B|.$
  \end{thm}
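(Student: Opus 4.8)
The estimate is two-sided, and I would prove its two halves by entirely different mechanisms. The right-hand bound $\lambda_{1}(B)\leq S|B|^{\frac{2-p}{p}}$ is simply Theorem~\ref{Lyapunapp} applied to the ball $B$ in place of $\Omega$: since $B$ has finite measure and the kernel $|x-y|^{-(N-2s)}$ belongs to $L^{p/(p-1)}(B\times B)$, the spectral bound there gives exactly this inequality, with $S=\bigl\||x-y|^{-(N-2s)}\bigr\|_{L^{p/(p-1)}(B\times B)}$ the kernel norm produced by Theorem~\ref{Lyapunapp} for the domain $B$. Hence the whole substance of the theorem is the left-hand, geometric inequality $\lambda_{1}(\Omega)\leq\lambda_{1}(B)$, which expresses that among all sets of a prescribed measure the ball \emph{maximises} the first eigenvalue of the Riesz potential.

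To establish $\lambda_{1}(\Omega)\le\lambda_{1}(B)$ I would argue by symmetric decreasing rearrangement. Starting from the Rayleigh quotient \eqref{Rayleigh}, fix a test function $u$ with $\|u\|_{L^{2}(\Omega)}=1$. Replacing $u$ by $|u|$ does not change the denominator and can only increase the numerator $\int_{\Omega}\int_{\Omega}u(x)u(y)|x-y|^{-(N-2s)}\,dx\,dy$, so I may assume $u\ge 0$; I extend $u$ by zero to $\mathbb{R}^{N}$ and denote by $u^{*}$ its symmetric decreasing rearrangement. The decisive point is that, because $N-2s>0$, the kernel $z\mapsto|z|^{-(N-2s)}$ is radially strictly decreasing and therefore coincides with its own symmetric decreasing rearrangement.

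The Riesz rearrangement inequality, applied with $f=h=u$ and $g(z)=|z|^{-(N-2s)}$, then yields
\[
\int_{\Omega}\int_{\Omega}\frac{u(x)u(y)}{|x-y|^{N-2s}}\,dx\,dy\le\int_{\mathbb{R}^{N}}\int_{\mathbb{R}^{N}}\frac{u^{*}(x)u^{*}(y)}{|x-y|^{N-2s}}\,dx\,dy.
\]
Since rearrangement preserves the $L^{2}$-norm we have $\|u^{*}\|_{L^{2}}=\|u\|_{L^{2}(\Omega)}$, and $u^{*}$ is supported in the centred ball of measure $|\{u>0\}|\le|\Omega|=|B|$, hence in $B$ (the Rayleigh quotient being invariant under translations of the underlying set, the location of the centre is irrelevant). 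Thus $u^{*}$ is an admissible competitor for the Rayleigh quotient of $B$, and dividing by $\|u\|_{L^{2}}^{2}=\|u^{*}\|_{L^{2}}^{2}$ bounds the quotient of $u$ on $\Omega$ by $\lambda_{1}(B)$. Taking the supremum over all admissible $u$ gives $\lambda_{1}(\Omega)\le\lambda_{1}(B)$, and chaining this with the first step finishes the proof.

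I expect the only genuine obstacle to be the correct and rigorous use of the Riesz rearrangement inequality: one must check the integrability that makes every integral finite (secured by $|x-y|^{-(N-2s)}\in L^{p/(p-1)}(\Omega\times\Omega)$ together with $u\in L^{p/(p-1)}$), verify that the rearranged function is genuinely supported inside $B$ and stays $L^{2}$-normalised after the two successive passages $u\mapsto|u|\mapsto u^{*}$, and record that $\lambda_{1}$ is unaffected by translating $B$. The right-hand inequality, being a direct specialisation of Theorem~\ref{Lyapunapp} to the ball, is then immediate.
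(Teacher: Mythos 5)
Your proof is correct, and it splits the statement exactly as the paper does, but the two halves are handled with different degrees of self-containment. For the right-hand bound $\lambda_{1}(B)\leq S|B|^{\frac{2-p}{p}}$ you and the paper do the same thing: specialise Theorem \ref{Lyapunapp} (i.e.\ Theorem \ref{Lyapunov} with $\omega=1/\lambda_{1}$) to the ball $B$. The divergence is in the left-hand inequality: the paper's entire proof of $\lambda_{1}(\Omega)\leq\lambda_{1}(B)$ is a one-line citation of \cite{RRS}, whereas you prove it directly by symmetric decreasing rearrangement. Your chain --- pass from $u$ to $|u|$ (legitimate since the kernel is positive, so the numerator of \eqref{Rayleigh} can only grow while the denominator is unchanged), extend by zero, apply the Riesz rearrangement inequality with $f=h=u$ and $g(z)=|z|^{-(N-2s)}$ equal to its own rearrangement, and observe that $u^{*}$ is an $L^{2}$-norm-preserving competitor supported in a translate of $B$ --- is in substance the very mechanism behind the cited isoperimetric result of \cite{RRS}, so you have effectively inlined the black box rather than invoked it. What your route buys is a self-contained argument that also makes transparent why the ball is the \emph{maximiser} here (the Rayleigh quotient \eqref{Rayleigh} is a supremum, so it suffices that every competitor for $\Omega$ yields one for $B$; no existence of extremisers is needed), in contrast with Faber--Krahn-type minimisation for the Dirichlet Laplacian; what the paper's route buys is brevity and implicit access to the stronger Schatten-norm inequalities of \cite{RRS}. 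Two small refinements to your write-up: strict radial decrease of the kernel is not needed (symmetric decreasing suffices, and the layer-cake representation $|z|^{-(N-2s)}=\int_{0}^{\infty}\chi_{\{|z|<t^{-1/(N-2s)}\}}\,dt$ shows both that the kernel equals its rearrangement and that the Riesz inequality applies by monotone convergence despite the kernel lying in no global Lebesgue space), and the finiteness of the numerator for $u\in L^{2}(\Omega)$ follows from H\"older together with $L^{2}(\Omega)\subset L^{p}(\Omega)$ on the finite-measure set $\Omega$, since $1<p<2$ and the kernel lies in $L^{\frac{p}{p-1}}(\Omega\times\Omega)$ by hypothesis.
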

\begin{proof}[Proof of Theorem \ref{LyapunappRN}]
The proof of $\lambda_{1}(B)\leq S|B|^{\frac{2-p}{p}}$ is the same as the proof of Theorem \ref{Lyapunapp}. From \cite{RRS} we have
$$\lambda_{1}(B)\geq\lambda_{1}(\Omega).$$
The proof of Theorem \ref{LyapunappRN} is complete.
\end{proof}

In \cite{KS} the authors proved a Lyapunov-type inequality for the fractional $p$-sub-Laplacian with the homogeneous Dirichlet condition. Here we establish Lyapunov-type inequality for the fractional $p$-sub-Laplacian system for the homogeneous Dirichlet problem. Namely, let us consider the fractional $p$-sub-Laplacian system:
\begin{equation}\label{sys}
 \begin{cases}
   (-\Delta_{p_{1},q})^{s_{1}}u_{1}(x)=\omega_{1}(x)|u_{1}(x)|^{\alpha_{1}-2}u_{1}(x)|u_{2}(x)|^{\alpha_{2}}\ldots|u_{n}(x)|^{\alpha_{n}},\,\,x\in\Omega,\\
   (-\Delta_{p_{2},q})^{s_{2}}u_{2}(x)=\omega_{2}(x)|u_{1}(x)|^{\alpha_{1}}|u_{2}(x)|^{\alpha_{2}-2}u_{2}(x)\ldots|u_{n}(x)|^{\alpha_{n}},\,\,x\in\Omega,\\
   \quad\quad\quad\quad\quad\quad\quad\ldots\\
   (-\Delta_{p_{n},q})^{s_{n}}u_{n}(x)=\omega_{n}(x)|u_{1}(x)|^{\alpha_{1}}|u_{2}(x)|^{\alpha_{2}}\ldots|u_{n}(x)|^{\alpha_{n}-2}u_{n}(x),\,\,x\in\Omega,
 \end{cases}
\end{equation}
with homogeneous Dirichlet conditions
\begin{equation}\label{Dirprob}
 u_{i}(x)=0,\,\,\,\,x\in\mathbb{G}\setminus\Omega,\,\,\,i=1,\ldots,n,
\end{equation}
where $\Omega\subset\mathbb{G}$ is a Haar measurable set, $\omega_{i}\in L^{1}(\Omega)$, $\omega_{i}\geq0$, $s_{i}\in(0,1)$,  $p_{i}\in(1,\infty)$ and
 $(-\Delta_{p,q})^{s}$ is the fractional $p$-sub-Laplacian  on $\mathbb{G}$  defined by
\begin{multline}
(-\Delta_{p_{i},q})^{s_{i}}u_{i}(x)=2\lim_{\delta\searrow 0}\int_{\mathbb{G} \setminus B_{q}(x,\delta)}\frac{|u_{i}(x)-u_{i}(y)|^{p_{i}-2}(u_{i}(x)-u_{i}(y))}{q^{Q+s_{i}p_{i}}(y^{-1}\circ x)}dy, \,\,\,x\in \mathbb{G},\\ \,\,i=1,\ldots,n.
\end{multline}
 Here $B_{q}(x,\delta)$ is a quasi-ball with respect to $q$, with radius $\delta,$ centred at $x\in\mathbb{G},$ and $\alpha_{i}$ are positive parameters such that
 \begin{equation}\label{sum}
 \sum_{i=1}^{n}\frac{\alpha_{i}}{p_{i}}=1.
 \end{equation}
To prove a Lyapunov-type inequality for the system we need some preliminary results from \cite{KS}, the so-called fractional Hardy inequality on the homogeneous Lie groups.
\begin{thm}[\cite{KS}, Fractional Hardy inequality]\label{harun1}
For all $u\in C^{\infty}_{c}(\mathbb{G})$ we have
\begin{equation}\label{harun}
C\int_{\mathbb{G}}\frac{|u(x)|^{p}}{q^{ps}(x)}dx\leq[u]^{p}_{s,p,q},
\end{equation}
where $p\in(1,\infty),\,s\in(0,1),$ and $C$ is a positive constant.
\end{thm}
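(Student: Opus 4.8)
The plan is to establish \eqref{harun} by the nonlinear ground-state substitution method of Frank and Seiringer, transported to $\mathbb{G}$ through the dilation structure and the polar decomposition \eqref{EQ:polar}. Matching the scaling of the two sides of \eqref{harun} under $x\mapsto\lambda x$ forces the comparison weight to be the $D_{\lambda}$-homogeneous function $w(x)=q(x)^{-(Q-sp)/p}$, which will play the role of a virtual ground state.

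First I would write $u=wv$ and rewrite the quasi-seminorm as
$$[u]^{p}_{s,p,q}=\int_{\mathbb{G}}\int_{\mathbb{G}}\frac{|w(x)v(x)-w(y)v(y)|^{p}}{q^{Q+sp}(y^{-1}\circ x)}\,dx\,dy.$$
The analytic core is a pointwise convexity inequality for $t\mapsto|t|^{p}$, with separate constants in the ranges $1<p<2$ and $p\ge2$, which after symmetrising in $x$ and $y$ and integrating yields
$$[u]^{p}_{s,p,q}\ge c_{p}\int_{\mathbb{G}}(Lw)(x)\,w(x)\,|v(x)|^{p}\,dx,$$
where
$$Lw(x)=\mathrm{p.v.}\int_{\mathbb{G}}\frac{|w(x)-w(y)|^{p-2}(w(x)-w(y))}{q^{Q+sp}(y^{-1}\circ x)}\,dy$$
is the fractional $p$-sub-Laplacian evaluated at $w$, and the discarded remainder (a $w$-weighted quasi-seminorm of $v$) is nonnegative.

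Next I would identify $Lw$. Since $w$ is homogeneous of degree $-(Q-sp)/p$ and the kernel is homogeneous of degree $-(Q+sp)$, the dilation covariance of $L$ shows that $Lw$ is homogeneous of degree $-(Q-sp)(p-1)/p-sp$, that is, $Lw(x)=\Lambda(x/q(x))\,q(x)^{-sp}w(x)^{p-1}$ for some function $\Lambda$ on the unit quasi-sphere $\omega_{Q}$, with $\Lambda(\sigma)=Lw(\sigma)$ for $\sigma\in\omega_{Q}$. Inserting this into the previous bound and using $w(x)v(x)=u(x)$ collapses the main term to $c_{p}\int_{\mathbb{G}}\Lambda(x/q(x))\,q(x)^{-sp}|u(x)|^{p}\,dx$, so that \eqref{harun} follows with $C=c_{p}\inf_{\omega_{Q}}\Lambda$, provided this infimum is strictly positive.

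The main obstacle is precisely the strict positivity of $\Lambda$ on $\omega_{Q}$. Evaluating at $\sigma\in\omega_{Q}$, where $w(\sigma)=1$, gives
$$\Lambda(\sigma)=\mathrm{p.v.}\int_{\mathbb{G}}\frac{|1-w(y)|^{p-2}(1-w(y))}{q^{Q+sp}(y^{-1}\circ\sigma)}\,dy,$$
an absolutely convergent expression (the principal value is harmless at the identity since $s<1$, the weight $w$ is integrable against the kernel near the origin because $Q+sp(p-1)>0$, and $Q+sp>Q$ gives decay at infinity), in which the region $\{q(y)>1\}$ contributes positively and $\{q(y)<1\}$ negatively. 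In $\mathbb{R}^{N}$ rotational symmetry permits an explicit Beta-function evaluation showing $\Lambda>0$; on a general homogeneous group no such formula is available, and the genuine work is a quantitative comparison of the two regions, using the strict convexity of $t\mapsto|t|^{p}$, to prove that the positive contribution dominates. Once $\Lambda(\sigma)\ge\Lambda_{0}>0$ is secured uniformly on the compact quasi-sphere $\omega_{Q}$, inequality \eqref{harun} follows with $C=c_{p}\Lambda_{0}$.
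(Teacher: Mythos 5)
Your proposal is a programme rather than a proof, and its missing piece is exactly the content of the theorem. First, for context: the paper does not prove Theorem \ref{harun1} at all — it imports it from \cite{KS} — and within the paper it is also recovered (see the remark following Theorem \ref{CKN1}) as the special case $a=1$, $\tau=p$, $\beta_{1}=\beta_{2}=0$, $\gamma=-s$ of the weighted Caffarelli--Kohn--Nirenberg inequality, whose proof runs through the dyadic decomposition into the quasi-annuli $A_{k,q}$, the scaled estimate of Lemma \ref{GN1}, the elementary Lemma \ref{elin} and the discrete H\"older inequality \eqref{kb}; that scheme never needs a ground state or any angular positivity. Your Frank--Seiringer route, by contrast, reduces everything to the strict positivity $\inf_{\omega_{Q}}\Lambda>0$, and you do not prove it: you correctly observe that in $\mathbb{R}^{N}$ this is an explicit computation using rotational symmetry, and then merely assert that on $\mathbb{G}$ ``the genuine work'' is to show the positive region dominates. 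Strict convexity of $t\mapsto|t|^{p}$ does not give this: the sign of $\Lambda(\sigma)$ is a competition between the contributions of $\{q(y)<1\}$ and $\{q(y)>1\}$, governed by the interplay of the specific profile $w(y)=q(y)^{-(Q-sp)/p}$ with the kernel $q^{-(Q+sp)}(y^{-1}\circ\sigma)$, and for a general quasi-norm there is no symmetry or explicit formula to exploit. With that lemma missing, nothing has been established.

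There are also two technical flaws upstream of the gap. Your claim that $\Lambda(\sigma)$ is absolutely convergent at $y=\sigma$ ``since $s<1$'' is false in general: even granting that $w$ is Lipschitz near $\sigma$ — itself not automatic, since the paper only assumes the quasi-norm is continuous, so $w$ need not be better than continuous and the principal value may not even be well defined — the integrand near the singularity is of size $q^{\,p-1-(Q+sp)}(y^{-1}\circ\sigma)$, and the polar decomposition \eqref{EQ:polar} turns this into $\int_{0}^{\varepsilon}\rho^{\,p-2-sp}\,d\rho$, which converges only for $s<(p-1)/p$; for $s\geq(p-1)/p$ (e.g.\ $p=2$, $s\geq\frac{1}{2}$) one needs genuine principal-value cancellation, which in $\mathbb{R}^{N}$ uses the reflection symmetry of $|x-y|$ and has no analogue for a general $q$ on $\mathbb{G}$. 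Moreover, in the range $1<p<2$ the ground-state representation is not the $p\geq2$ convexity inequality ``with a different constant'': Frank and Seiringer require a structurally different elementary inequality and a different form of the remainder there, which you would also have to adapt. None of these issues arises in the annuli-decomposition argument on which the paper (following \cite{KS}) actually relies, so if you want a self-contained proof on $\mathbb{G}$, that is the route to take.
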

We denote by $r_{\Omega,q}$ the inner quasi-radius of $\Omega$, that is,
\begin{equation}
r_{\Omega,q}=\max\{q(x):\,\,x\in\Omega\}.
\end{equation}
\begin{defn}
We say that $(u_{1},\ldots,u_{n})\in \prod_{i=1}^{n} W^{s_{i},p_{i}}_{0} (\Omega)$ is a weak solution of \eqref{sys}-\eqref{Dirprob} if for all $(v_{1},\ldots,v_{n})\in \prod_{i=1}^{n} W^{s_{i},p_{i}}_{0} (\Omega)$, we have
\begin{multline}\label{5.6}
\int_{\mathbb{G}}\int_{\mathbb{G}}\frac{|u_{i}(x)-u_{i}(y)|^{p_{i}-2}(u_{i}(x)-u_{i}(y))(v_{i}(x)-v_{i}(y))}{q^{Q+s_{i}p_{i}}(y^{-1}\circ x)}dxdy\\
=\int_{\Omega}\omega_{i}(x)\left(\prod_{j=1}^{i-1}|u_{j}(x)|^{\alpha_{j}}\right)\left(\prod_{j=i+1}^{n}|u_{j}(x)|^{\alpha_{j}}\right)|u_{i}(x)|^{\alpha_{i}-2}u_{i}(x)v_{i}(x)dx,
\end{multline}
 for every $i=1,\ldots,n$.
\end{defn}
Now we present the following analogue of the Lyapunov-type inequality for the fractional $p$-sub-Laplacian system on $\mathbb{G}$.
\begin{thm}\label{thmlyap}
Let $s_{i}\in(0,1)$ and $p_{i}\in (1,\infty)$ be such that $Q>s_{i}p_{i}$ for all $i=1,\ldots,n$. Let $\omega_{i}\in L^{\theta}(\Omega)$  be a non-negative weight and assume that
$$1<\max_{i=1,...,n}\left\{\frac{Q}{s_{i}p_{i}}\right\}<\theta<\infty.$$
If \eqref{sys}-\eqref{Dirprob} admits a nontrivial weak solution, then
\begin{equation}\label{lyap}
\prod_{i=1}^{n}\|\omega_{i}\|^{\frac{\theta\alpha_{i}}{p_{i}}}_{L^{\theta}(\Omega)}\geq C r_{\Omega,q}^{Q-\theta\sum_{j=1}^{n}s_{j}\alpha_{j}},
\end{equation}
where $C>0$ is a positive constant.
\end{thm}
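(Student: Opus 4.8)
The plan is to test each equation of the system against its own solution and then combine the resulting $n$ identities using the balance condition $\sum_{i=1}^n \alpha_i/p_i = 1$. First I would take $v_i = u_i$ in the weak formulation \eqref{5.6}; since $|u_i|^{\alpha_i-2}u_i\cdot u_i=|u_i|^{\alpha_i}$ and $|u_i(x)-u_i(y)|^{p_i-2}(u_i(x)-u_i(y))^2=|u_i(x)-u_i(y)|^{p_i}$, for every $i$ this yields the identity
\[
[u_i]_{s_i,p_i,q}^{p_i} = \int_\Omega \omega_i(x)\prod_{j=1}^n |u_j(x)|^{\alpha_j}\,dx .
\]
Before proceeding I would record that a nontrivial weak solution must have every component nonzero: if some $u_{j_0}\equiv 0$ then the factor $|u_{j_0}|^{\alpha_{j_0}}$ (with $\alpha_{j_0}>0$) kills the right-hand side of every equation, forcing all $u_i\equiv 0$. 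Hence $[u_i]_{s_i,p_i,q}>0$ for all $i$, which I will need for the final cancellation.

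Next I would estimate the right-hand side by a single application of the generalised H\"older inequality to the $n+2$ factors $\omega_i$, $|u_1|^{\alpha_1},\dots,|u_n|^{\alpha_n}$ and the constant $1$, with exponents $\theta$ for $\omega_i$, $p_j^*/\alpha_j$ for $|u_j|^{\alpha_j}$ (where $p_j^*=\tfrac{Qp_j}{Q-s_jp_j}$), and the remaining exponent carrying a power of $|\Omega|$. Writing $\sigma=\sum_{j=1}^n s_j\alpha_j$ and using $\sum_j \alpha_j/p_j^*=1-\sigma/Q$, the reciprocals add up to $\tfrac1\theta+1-\tfrac\sigma Q\le 1$ precisely when $\theta\ge Q/\sigma$, leaving a constant factor $\|1\|_{L^{r_0}(\Omega)}=|\Omega|^{\sigma/Q-1/\theta}$. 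Applying the fractional Sobolev inequality (Theorem \ref{sob}) to replace each $\|u_j\|_{L^{p_j^*}(\Omega)}$ by $C[u_j]_{s_j,p_j,q}$, I obtain for each $i$
\[
[u_i]_{s_i,p_i,q}^{p_i}\le C\,\|\omega_i\|_{L^\theta(\Omega)}\,|\Omega|^{\frac{\sigma}{Q}-\frac1\theta}\prod_{j=1}^n [u_j]_{s_j,p_j,q}^{\alpha_j}.
\]

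I would then raise the $i$-th inequality to the power $\alpha_i/p_i$ and multiply over $i$. The balance condition $\sum_i\alpha_i/p_i=1$ makes the product of the seminorm factors on both sides collapse to $\prod_j[u_j]_{s_j,p_j,q}^{\alpha_j}$ and leaves the $|\Omega|$-exponent equal to $\sigma/Q-1/\theta$. Cancelling the common nonzero factor $\prod_j[u_j]^{\alpha_j}$ gives $1\le C\,|\Omega|^{\sigma/Q-1/\theta}\prod_i\|\omega_i\|_{L^\theta(\Omega)}^{\alpha_i/p_i}$, that is, $\prod_i\|\omega_i\|_{L^\theta}^{\theta\alpha_i/p_i}\ge C\,|\Omega|^{1-\theta\sigma/Q}$ after raising to the power $\theta$.

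Finally I would convert the volume into the inner quasi-radius. Since $\Omega\subset B_{r_{\Omega,q}}$, the polar formula \eqref{EQ:polar} gives $|\Omega|\le C r_{\Omega,q}^Q$. The sign of the exponent is decisive here: viewing $\alpha_j/p_j$ as weights summing to $1$, $\sigma=\sum_j (\alpha_j/p_j)(s_jp_j)$ is a weighted average of the numbers $s_jp_j$, so $Q/\sigma\le \max_i Q/(s_ip_i)<\theta$, whence $1-\theta\sigma/Q<0$ and the inequality $|\Omega|\le Cr_{\Omega,q}^Q$ reverses upon raising to this negative power, yielding $|\Omega|^{1-\theta\sigma/Q}\ge C r_{\Omega,q}^{Q-\theta\sigma}$ and hence \eqref{lyap}. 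The main obstacle is the exponent bookkeeping in the H\"older step: one must check that the hypothesis $\theta>\max_i Q/(s_ip_i)$ simultaneously guarantees a feasible choice of H\"older exponents (so the decomposition closes with a nonnegative power of $|\Omega|$) and fixes the sign that makes the volume-to-radius passage go in the correct direction.
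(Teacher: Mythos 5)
Your proof is correct, but it takes a genuinely different route from the paper's. Both arguments begin with the same energy identity (taking $v_i=u_i$ in \eqref{5.6}) and end with the same multiplicative combination with weights $e_i=\alpha_i/p_i$ exploiting $\sum_i\alpha_i/p_i=1$; the difference is how the geometry of $\Omega$ enters. The paper interpolates $\int_\Omega |u_i|^{\xi_i}q^{-\gamma_i s_i p_i}(x)\,dx$ with $\xi_i=\theta' p_i$ and $\gamma_i=\bigl(\theta-\tfrac{Q}{s_ip_i}\bigr)/(\theta-1)$ between a Hardy term and a Sobolev term, so it needs the fractional Hardy inequality (Theorem \ref{harun1}) alongside Theorem \ref{sob}, and it extracts the radius from the pointwise bound $q(x)\le r_{\Omega,q}$ on $\Omega$; the hypothesis $\theta>\max_i Q/(s_ip_i)$ is exactly what makes each $\gamma_i\in(0,1)$. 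You avoid the Hardy inequality altogether: a single generalised H\"older with a slack slot for the constant function produces the factor $|\Omega|^{\sigma/Q-1/\theta}$ with $\sigma=\sum_j s_j\alpha_j$, only the Sobolev inequality is invoked, and the radius appears at the very end via $\Omega\subset B_{r_{\Omega,q}}$, $|\Omega|\le Cr_{\Omega,q}^{Q}$, with the negative exponent $1-\theta\sigma/Q$ reversing the inequality. Your exponent bookkeeping checks out: $\sum_j\alpha_j/p_j^*=1-\sigma/Q$; each $\alpha_j\le p_j<p_j^*$ (from $\sum_i\alpha_i/p_i=1$ with positive terms), so all H\"older exponents are admissible; and $Q/\sigma\le\max_i Q/(s_ip_i)<\theta$ since $\sigma$ is a weighted average of the $s_jp_j$. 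Your route buys three things: the intermediate bound $\prod_i\|\omega_i\|_{L^\theta(\Omega)}^{\theta\alpha_i/p_i}\ge C|\Omega|^{1-\theta\sigma/Q}$ is stronger than \eqref{lyap} under the paper's definition $r_{\Omega,q}=\max\{q(x):x\in\Omega\}$; it works under the weaker hypothesis $\theta\ge Q/\sigma$; and your preliminary observation that a nontrivial solution must have every component nonzero supplies the positivity needed to cancel $\prod_j[u_j]_{s_j,p_j,q}^{\alpha_j}$ --- a point the paper's proof uses silently when dividing in \eqref{predpos}. What the paper's Hardy-based route buys in exchange is that the domain geometry enters through the pointwise weight $q^{-\gamma_is_ip_i}(x)$ rather than through the total volume, the form that would adapt to settings where a volume-to-radius comparison is unavailable.
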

\begin{rem}
	In Theorem \ref{thmlyap}, by taking $n=1$ and $\alpha_{1}=p$, we establish the Lyapunov-type inequality for the fractional $p$-sub-Laplacian on $\mathbb{G}$ (see, e.g. \cite[Theorem 3.1]{KS}).
\end{rem}
\begin{proof}[Proof of Theorem \ref{thmlyap}]
For all $i=1,\ldots,n,$ let us define
\begin{equation}\label{xij}
\xi_{i}=\gamma_{i} p_{i}+(1-\gamma_{i})p_{i}^{*},
\end{equation}
and
\begin{equation}\label{gammaj}
\gamma_{i}=\frac{\theta-\frac{Q}{s_{i}p_{i}}}{\theta-1},
\end{equation}
where $p_{i}^{*}=\frac{Q}{Q-s_{i}p_{i}}$ is the Sobolev conjugate exponent as in Theorem \ref{sob}. Notice that for all  $i=1,\ldots,n$ we have $\gamma_{i}\in(0,1)$ and $\xi_{i}=p_{i}\theta'$, where $\theta'=\frac{\theta}{\theta-1}$. Then for every $i\in\{1,\ldots,n\}$ we get
$$\int_{\Omega}\frac{|u_{i}(x)|^{\xi_{i}}}{r^{\gamma_{i}s_{i}p_{i}}_{\Omega,q}}dx\leq\int_{\Omega}\frac{|u_{i}(x)|^{\xi_{i}}}{q^{\gamma_{i}s_{i}p_{i}}(x)}dx,$$
and by using H\"{o}lder's inequality with the following exponents $\nu_{i}=\frac{1}{\gamma_{i}}$ and $\frac{1}{\nu_{i}}+\frac{1}{\nu'_{i}}=1,$ we get
\begin{multline}\label{5.24}
\int_{\Omega}\frac{|u_{i}(x)|^{\xi_{i}}}{q^{\gamma_{i}s_{i}p_{i}}(x)}dx=\int_{\Omega}\frac{|u_{i}(x)|^{\gamma_{i}p_{i}}|u_{i}(x)|^{(1-\gamma_{i})p_{i}^{*}}}{q^{\gamma_{i}s_{i}p_{i}}(x)}dx\\
\leq\left(\int_{\Omega}\frac{|u_{i}(x)|^{p_{i}}}{q^{s_{i}p_{i}}(x)}dx\right)^{\gamma_{i}}\left(\int_{\Omega}|u_{i}(x)|^{p^{*}_{i}}dx\right)^{1-\gamma_{i}}.
\end{multline}
On the other hand, from Theorem \ref{sob}, we obtain
$$\left(\int_{\Omega}|u_{i}(x)|^{p^{*}_{i}}dx\right)^{1-\gamma_{i}}\leq C [u_{i}]_{s_{i},p_{i},q}^{p^{*}_{i}(1-\gamma_{i})},$$
and from Theorem \ref{harun1}, we have
$$\left(\int_{\Omega}\frac{|u_{i}(x)|^{p_{i}}}{q^{s_{i}p_{i}}(x)}dx\right)^{\gamma_{i}}\leq C[u_{i}]_{s_{i},p_{i},q}^{p_{i}\gamma_{i}}.$$
Thus, from \eqref{5.24} and by taking $u_{i}(x)=v_{i}(x)$ in \eqref{5.6}, we get
$$\int_{\Omega}\frac{|u_{i}(x)|^{\xi_{i}}}{q^{\gamma_{i}s_{i}p_{i}}(x)}\leq C ([u_{i}]_{s_{i},p_{i},q,\Omega}^{p_{i}})^{\frac{\xi_{i}}{p_{i}}}\leq C ([u_{i}]_{s_{i},p_{i},q}^{p_{i}})^{\frac{\xi_{i}}{p_{i}}}$$
$$=C\left(\int_{\Omega}\omega_{i}(x)\prod_{j=1}^{n}|u_{j}|^{\alpha_{j}}dx\right)^{\frac{\xi_{i}}{p_{i}}}=C \left(\int_{\Omega}\omega_{i}(x)\prod_{j=1}^{n}|u_{j}|^{\alpha_{j}}dx\right)^{\theta'},$$
for every $i=1,\ldots,n$. Therefore, by using H\"{o}lder's inequality with exponents $\theta$ and $\theta'$, we obtain
$$\int_{\Omega}\frac{|u_{i}(x)|^{\xi_{i}}}{q^{\gamma_{i}s_{i}p_{i}}(x)}dx\leq C\|\omega_{i}\|^{\frac{\theta}{\theta-1}}_{L^{\theta}(\Omega)}\int_{\Omega}\prod_{j=1}^{n}|u_{j}(x)|^{\alpha_{j}\theta'}dx.$$
By using H\"{o}lder's inequality and \eqref{sum}, we get
$$\int_{\Omega}\prod_{j=1}^{n}|u_{j}(x)|^{\alpha_{j}\theta'}dx\leq \prod_{j=1}^{n}\left(\int_{\Omega}|u_{j}|^{\theta'p_{j}}dx\right)^{\frac{\alpha_{j}}{p_{j}}}.$$
This implies that
$$\int_{\Omega}\frac{|u_{i}(x)|^{\xi_{i}}}{q^{\gamma_{i}s_{i}p_{i}}(x)}dx\leq C\|\omega_{i}\|^{\frac{\theta}{\theta-1}}_{L^{\theta}(\Omega)}\prod_{j=1}^{n}\left(\int_{\Omega}|u_{j}|^{\theta'p_{j}}dx\right)^{\frac{\alpha_{j}}{p_{j}}}.$$
So we establish
$$\int_{\Omega}\frac{|u_{i}(x)|^{\xi_{i}}}{r^{\gamma_{i}s_{i}p_{i}}_{\Omega,q}}dx\leq\int_{\Omega}\frac{|u_{i}(x)|^{\xi_{i}}}{q^{\gamma_{i}s_{i}p_{i}}(x)}dx$$
$$\leq C\|\omega_{i}\|^{\frac{\theta}{\theta-1}}_{L^{\theta}(\Omega)}\prod_{j=1}^{n}\left(\int_{\Omega}|u_{j}|^{\theta'p_{j}}dx\right)^{\frac{\alpha_{j}}{p_{j}}}.$$
Thus, for every $e_{i}>0$ we have
$$\left(\int_{\Omega}\frac{|u_{i}(x)|^{\xi_{i}}}{r^{\gamma_{i}s_{i}p_{i}}_{\Omega,q}}dx\right)^{e_{i}}=\frac{1}{r^{e_{i}\gamma_{i}s_{i}p_{i}}_{\Omega,q}}\left(\int_{\Omega}|u_{i}(x)|^{\xi_{i}}dx\right)^{e_{i}}$$
$$\leq C\|\omega_{i}\|^{\frac{e_{i}\theta}{\theta-1}}_{L^{\theta}(\Omega)}\prod_{j=1}^{n}\left(\int_{\Omega}|u_{j}|^{\theta'p_{j}}dx\right)^{\frac{e_{i}\alpha_{j}}{p_{j}}},$$
so that
$$\frac{1}{r_{\Omega,q}^{\sum_{j=1}^{n}\gamma_{j}s_{j}p_{j}e_{j}}}\prod_{i=1}^{n}\left(\int_{\Omega}|u_{i}(x)|^{\theta'p_{i}}dx\right)^{e_{i}}$$
$$\leq C \left(\prod_{i=1}^{n}\|\omega_{i}\|^{\frac{e_{i}\theta}{\theta-1}}_{L^{\theta}(\Omega)}\right)\left(\prod_{i=1}^{n}\left(\int_{\Omega}|u_{i}(x)|^{\theta'p_{i}}dx\right)^{\frac{\alpha_{i}\sum_{j=1}^{n}e_{j}}{p_{i}}}\right).$$
This yields
\begin{equation}\label{predpos}
\frac{1}{r_{\Omega,q}^{\sum_{j=1}^{n}\gamma_{j}s_{j}p_{j}e_{j}}}\leq C \left(\prod_{i=1}^{n} \|\omega_{i}\|^{\frac{e_{i}\theta}{\theta-1}}_{L^{\theta}(\Omega)}\right) \left(\prod_{i=1}^{n}\left(|u_{i}(x)|^{\theta'p_{i}}dx\right)^{\frac{\alpha_{i}\sum_{j=1}^{n}e_{j}}{p_{i}}-e_{i}}\right),
\end{equation}
where $C$ is a positive constant.
Then, we choose $e_{i},\,i=1,\ldots,n,$ such that
$$\frac{\alpha_{i}\sum_{j=1}^{n}e_{j}}{p_{i}}-e_{i}=0,\,\,\,i=1,\ldots,n.$$
Consequently, from \eqref{sum} we have the solution of this system
\begin{equation}\label{ej}
e_{i}=\frac{\alpha_{i}}{p_{i}},\,\,i=1,\ldots,n.
\end{equation}
From \eqref{predpos}, \eqref{gammaj} and \eqref{ej} we arrive at
\begin{equation}
\prod_{i=1}^{n}\|\omega_{i}\|^{\frac{\theta\alpha_{i}}{p_{i}}}_{L^{\theta}(\Omega)}\geq C r_{\Omega,q}^{Q-\theta\sum_{j=1}^{n}s_{j}\alpha_{j}}.
\end{equation}
Theorem \ref{thmlyap} is proved.
\end{proof}

Now, let us discuss an application of the Lyapunov-type inequality for the fractional $p$-sub-Laplacian system on $\mathbb{G}.$ In order to do it we consider the spectral problem for the fractional $p$-sub-Laplacian system in the following form:
\begin{equation}\label{sys1}
 \begin{cases}
   (-\Delta_{p_{1},q})^{s_{1}}u_{1}(x)=\lambda_{1}\alpha_{1}\varphi(x)|u_{1}(x)|^{\alpha_{1}-2}u_{1}(x)|u_{2}(x)|^{\alpha_{2}}\ldots|u_{n}(x)|^{\alpha_{n}},\,\,x\in\Omega,\\
   (-\Delta_{p_{2},q})^{s_{2}}u_{2}(x)=\lambda_{2}\alpha_{2}\varphi(x)|u_{1}(x)|^{\alpha_{1}}|u_{2}(x)|^{\alpha_{2}-2}u_{2}(x)\ldots|u_{n}(x)|^{\alpha_{n}},\,\,x\in\Omega,\\
   \quad\quad \quad \quad \quad \quad \quad  {\ldots}\\
   (-\Delta_{p_{n},q})^{s_{n}}u_{n}(x)=\lambda_{n}\alpha_{n}\varphi(x)|u_{1}(x)|^{\alpha_{1}}|u_{2}(x)|^{\alpha_{2}}\ldots|u_{n}(x)|^{\alpha_{n}-2}u_{n}(x),\,\,x\in\Omega,
 \end{cases}
\end{equation}
with
\begin{equation}\label{Dirprob1}
 u_{i}(x)=0,\,\,\,\,x\in\mathbb{G}\setminus\Omega,\,\,\,i=1,\ldots,n,
\end{equation}
where $\Omega\subset\mathbb{G}$ is a Haar measurable set, $\varphi\in L^{1}(\Omega)$, $\varphi\geq0$ and $s_{i}\in(0,1)$,  $p_{i}\in(1,\infty),\,\,i=1,\ldots,n.$
\begin{defn}
We say that $\lambda=(\lambda_{1},\ldots,\lambda_{n})$ is an eigenvalue if the problem \eqref{sys1}-\eqref{Dirprob1} admits at least one nontrivial weak solution $(u_{1},\ldots,u_{n})\in \prod_{i={1}}^{n} W^{s_{i},p_{i}}_{0}(\Omega).$
\end{defn}
\begin{thm}\label{thmapplyap}
Let $s_{i}\in(0,1)$ and $p_{i}\in(1,\infty)$ be such that $Q>s_{i}p_{i},$ for all $i=1,\ldots,n,$  and $$1<\max_{i=1,\ldots,n}\left\{\frac{Q}{s_{i}p_{i}}\right\}<\theta<\infty.$$
Let $\varphi\in L^{\theta}(\Omega)$ with $\|\varphi\|_{L^{\theta}(\Omega)}\neq0$.
Then, we have
\begin{equation}
\lambda_{k}\geq\frac{C}{\alpha_{k}}\left(\frac{1}{\prod_{i=1,i\neq k}^{n}\lambda_{i}^{\frac{\alpha_{i}}{p_{i}}}}\right)^\frac{p_{k}}{\alpha_{k}}\left(\frac{1}{r_{\Omega,q}^{\theta\sum_{i=1}^{n}\alpha_{i}s_{i}-Q}\prod_{i={1},i\neq k}^{n}\alpha_{i}^{\frac{\theta\alpha_{i}}{p_{i}}}\int_{\Omega}\varphi^{\theta}(x)dx}\right)^{\frac{p_{k}}{\theta\alpha_{k}}},
\end{equation}
where $C$ is a positive constant and $k=1,\ldots,n$.
\end{thm}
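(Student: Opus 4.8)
The plan is to view the eigenvalue system \eqref{sys1}--\eqref{Dirprob1} as a particular instance of the weighted system \eqref{sys}--\eqref{Dirprob}, and then to feed the Lyapunov-type inequality of Theorem \ref{thmlyap} into a purely algebraic rearrangement. Comparing the right-hand sides term by term, the spectral system is exactly \eqref{sys} with the choice $\omega_i(x)=\lambda_i\alpha_i\varphi(x)$ for each $i=1,\dots,n$. Since $\lambda=(\lambda_1,\dots,\lambda_n)$ is by hypothesis an eigenvalue, the definition supplies a nontrivial weak solution $(u_1,\dots,u_n)\in\prod_{i=1}^n W^{s_i,p_i}_0(\Omega)$, so all the hypotheses of Theorem \ref{thmlyap} are met: $\omega_i=\lambda_i\alpha_i\varphi\ge 0$ because $\lambda_i,\alpha_i>0$ and $\varphi\ge 0$, and $\omega_i\in L^\theta(\Omega)$ because $\varphi\in L^\theta(\Omega)$.

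Next I would substitute these weights into the Lyapunov inequality \eqref{lyap}. Since $\|\omega_i\|_{L^\theta(\Omega)}=\lambda_i\alpha_i\|\varphi\|_{L^\theta(\Omega)}$, the inequality \eqref{lyap} becomes
\[
\prod_{i=1}^n\big(\lambda_i\alpha_i\big)^{\frac{\theta\alpha_i}{p_i}}\,\|\varphi\|_{L^\theta(\Omega)}^{\,\theta\sum_{i=1}^n\frac{\alpha_i}{p_i}}\ \ge\ C\,r_{\Omega,q}^{\,Q-\theta\sum_{j=1}^n s_j\alpha_j}.
\]
Here the normalisation \eqref{sum}, namely $\sum_{i=1}^n \alpha_i/p_i=1$, is used crucially: it collapses the exponent on $\|\varphi\|_{L^\theta(\Omega)}$ to $\theta$, so that the $\varphi$-factor is simply $\int_\Omega\varphi^\theta(x)\,dx$.

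Finally I would isolate $\lambda_k$. Separating the $i=k$ factor from the products over $i\ne k$ and dividing, the estimate reads
\[
\lambda_k^{\frac{\theta\alpha_k}{p_k}}\ \ge\ \frac{C\,r_{\Omega,q}^{\,Q-\theta\sum_{j} s_j\alpha_j}}{\Big(\prod_{i\ne k}\lambda_i^{\frac{\theta\alpha_i}{p_i}}\Big)\,\alpha_k^{\frac{\theta\alpha_k}{p_k}}\Big(\prod_{i\ne k}\alpha_i^{\frac{\theta\alpha_i}{p_i}}\Big)\int_\Omega\varphi^\theta(x)\,dx}.
\]
Raising both sides to the power $p_k/(\theta\alpha_k)$ and renaming the constant then yields precisely the claimed bound: the factor $\alpha_k^{-\theta\alpha_k/p_k}$ becomes $\alpha_k^{-1}$, the term involving $\prod_{i\ne k}\lambda_i$ becomes $\big(\prod_{i\ne k}\lambda_i^{\alpha_i/p_i}\big)^{-p_k/\alpha_k}$, and $r_{\Omega,q}^{\,Q-\theta\sum s_j\alpha_j}$ becomes $\big(r_{\Omega,q}^{\,\theta\sum\alpha_i s_i-Q}\big)^{-p_k/(\theta\alpha_k)}$. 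There is no genuine analytic obstacle here; the only point requiring care is the exponent bookkeeping in this last step, in particular keeping track of which $\alpha$-, $\lambda$-, and $r_{\Omega,q}$-powers get inverted when the exponent $p_k/(\theta\alpha_k)$ is applied.
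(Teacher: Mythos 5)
Your proposal is correct and is essentially the paper's own proof: both substitute $\omega_{i}=\lambda_{i}\alpha_{i}\varphi$ into Theorem \ref{thmlyap}, invoke \eqref{sum} to collapse $\prod_{i=1}^{n}\|\varphi\|_{L^{\theta}(\Omega)}^{\theta\alpha_{i}/p_{i}}$ to $\int_{\Omega}\varphi^{\theta}(x)\,dx$, and then isolate $\lambda_{k}$ by dividing out the $i\neq k$ factors and raising to the power $\frac{p_{k}}{\theta\alpha_{k}}$. Your exponent bookkeeping in the last step (the conversion of $\alpha_{k}^{-\theta\alpha_{k}/p_{k}}$ to $\alpha_{k}^{-1}$, of the $\lambda_{i}$-product to the power $-p_{k}/\alpha_{k}$, and of the $r_{\Omega,q}$-power) matches the paper exactly.
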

\begin{proof}[Proof of Theorem \ref{thmapplyap}]
In Theorem \ref{thmlyap} by taking $\omega_{k}=\lambda_{k}\alpha_{k}\varphi(x),\,\,k=1,\ldots,n$, we have
$$ \alpha_{k}^{\frac{\theta\alpha_{k}}{p_{k}}}\lambda_{k}^{\frac{\theta\alpha_{k}}{p_{k}}}\prod_{i=1,i\neq k}^{n}(\alpha_{i}\lambda_{i})^{\frac{\theta\alpha_{i}}{p_{i}}}\prod_{i=1}^{n}\|\varphi\|_{L^{\theta}(\Omega)}^{\frac{\theta \alpha_{i}}{p_{i}}}\geq C r_{\Omega,q}^{Q-\theta\sum_{j=1}^{n}s_{j}\alpha_{j}}.$$
Thus, using \eqref{sum} we obtain
$$\alpha_{k}^{\frac{\theta\alpha_{k}}{p_{k}}}\lambda_{k}^{\frac{\theta\alpha_{k}}{p_{k}}}\prod_{i=1,i\neq k}^{n}(\alpha_{i}\lambda_{i})^{\frac{\theta\alpha_{i}}{p_{i}}}\int_{\Omega}\varphi^{\theta}(x)dx    \geq C r_{\Omega,q}^{Q-\theta\sum_{j=1}^{n}s_{j}\alpha_{j}}.$$
This implies
$$\lambda_{k}^{\frac{\theta\alpha_{k}}{p_{k}}}\geq \frac{C}{\alpha_{k}^{\frac{\theta\alpha_{k}}{p_{k}}}r_{\Omega,q}^{\theta\sum_{j=1}^{n}s_{j}\alpha_{j}-Q}\prod_{i=1,i\neq k}^{n}(\alpha_{i}\lambda_{i})^{\frac{\theta\alpha_{i}}{p_{i}}}\int_{\Omega}\varphi^{\theta}(x)dx},\,\,k=1,\ldots,n.$$
Finally, we get that
\begin{multline}
\lambda_{k}\geq\frac{C}{\alpha_{k}}\left(\frac{1}{\prod_{i=1,i\neq k}^{n}\lambda_{i}^{\frac{\alpha_{i}}{p_{i}}}}\right)^\frac{p_{k}}{\alpha_{k}}\left(\frac{1}{r_{\Omega,q}^{\theta\sum_{i=1}^{n}\alpha_{i}s_{i}-Q}\prod_{i={1},i\neq k}^{n}\alpha_{i}^{\frac{\theta\alpha_{i}}{p_{i}}}\int_{\Omega}\varphi^{\theta}(x)dx}\right)^{\frac{p_{k}}{\theta\alpha_{k}}},\\
k=1,\ldots,n.
\end{multline}
Theorem \ref{thmapplyap} is proved.
\end{proof}

\end{document}